\def\XXint#1#2#3{{\setbox0=\hbox{$#1{#2#3}{\int}$ }
\vcenter{\hbox{$#2#3$ }}\kern-.6\wd0}}
\newcounter{assumption}
\newenvironment{assumption}[1][]{\refstepcounter{assumption}\par\medskip
   \noindent\textbf{Assumption~(A\theassumption). #1} \rmfamily}{\medskip}
\theoremstyle{definition} 
\newtheorem{theorem}{Theorem}[section]                      
\newtheorem{corollary}[theorem]{Corollary} 
\newtheorem{lemma}[theorem]{Lemma}
\newtheorem{definition}[theorem]{Definition}
\newtheorem{notation}[theorem]{Notation}
\newenvironment{beweis}{\begin{proof}[Proof]}{\end{proof}}
\newtheorem{beispiel}[theorem]{Example}
\newtheorem{bemerkung}[theorem]{Remark}
\newcommand{\AlignFootnote}[1]{%
  \ifmeasuring@
  \else
    \iffirstchoice@
      \footnote{#1}%
    \fi
  \fi}
\DeclareMathOperator*{\fast}{-a.s.}
\DeclareMathOperator{\tr}{Tr}
\newcommand{\normlp}[1]{\left( \sum^{\infty}_{k=0} \norm{#1}_{HS}^2 \right)^{\frac{1}{2}}}
\DeclareMathOperator{\supp}{supp}
\newcommand{\dd}{\mathrm{d}}
\newcommand{\E}{\mathbb{E}}
\newcommand{\fastsicher}{\quad \W \fast}
\newcommand{\W}{\mathbb{P}}
\newcommand{\F}[2]{\mathcal{F}^{#1}_{#2}}
\newcommand{\skalar}[1]{\big( #1 \big)}
\newcommand{\skalarq}[1]{ \langle #1 \rangle }
\DeclareMathOperator{\ccc}{C}
\newcommand{\N}{\mathbb{N}}
\newcommand{\B}{\mathcal{B}}
\newcommand{\R}{\mathbb{R}}
\newcommand{\Rd}{\R^{d}}
\newcommand{\M}{\mathfrak{M}}
\newcommand{\sumkinfty}{\sum_{k=0}^\infty}
\renewcommand{\phi}{\varphi}
\renewcommand{\epsilon}{\varepsilon}
\newcommand{\eps}{\varepsilon}
\newcommand*{\fancym}{\mathfrak{M}}
\DeclareMathOperator{\llll}{L}
\DeclareMathOperator{\dive}{div}
\DeclareMathOperator{\dett}{det}
\newcommand{\abs}[1]{\left\vert #1 \right\vert}
\newcommand{\norm}[1]{\vert\vert {#1}\vert\vert}
\newcommand{\lp}[1]{\llll^{#1}}
\DeclareMathOperator{\TR}{tr}
\begin{document}

\begin{center}
    \Large
    \textbf{Intermittency Phenomena for Mass Distributions of Stochastic Flows with interaction}
        
        
    \vspace{0.4cm}
    Andrey Dorogovtsev\footnote{NAS Ukraine, andrey.dorogovtsev@gmail.com}, Alexander Weiß\footnote{University of Leipzig, alexander.weiss@math.uni-leipzig.de}
       
    \vspace{0.9cm}
\end{center}
\section{Introduction}
This article is devoted to the investigation of measure valued solution to stochastic differential equations with interactions. Such equations were introduced by \cite{Dorogovtsev2007} in order to describe motion with interaction. These equations are of the form: 

\begin{align}\label{Einl}
    \begin{cases}
dx(u,t) &= a(x(u,t),\mu_t)\dd t +\sum_{k=0}^\infty b_k(x(u,t),\mu_t)\dd B_k(t)
\\ \forall u\in \Rd \quad x(u,0)&= u \\
\mu_t&= \mu_0\circ{x^{-1}(\cdot,t)},
    \end{cases}
\end{align}
where $\mu_0$ describes the initial distribution of particles, $\mu_t$ is the pushforward measure of the system $x(\cdot,t)$. The coefficients are measurable functions $a:\Rd\times \M(\Rd) \to \Rd$ and $b_k:\Rd \times \M(\Rd)\to \R^{d\times d}$, where $\M(\Rd)$ denotes the space of probability measures on $\Rd$.

  For initial measures with Lebesgue densities we can investigate the behaviour of Lebesgue densities for $\mu_t$, as $t\to \infty$. 
  In this article we will investigate the occurrence of the intermittency phenomenon for such densities. 
Intermittency is the occurrence of of rare but high peaks, which shape the behaviour of the moments of the underlying quantity as $t\to \infty$, namely a significant growth increase for higher moments over smaller moments.
Such phenomena have been studied for a in the context of hydrodynamic turbulence (\cite{monin1975statistical},\cite{bruno2001identifying} and many more). Moreover  intermittency has been considered by \cite{zel1987intermittency} in random media, using moments to characterise the intermittency phenomenon. This has also been considered in connection with the parabolic Anderson model (\cite{carmona1995stationary},\cite{khoshnevisan2014analysis},\cite{gartner1990parabolic},\cite{chen2015precise} and many more) using the technique of moment Lyapunov exponents to determine intermittency in the parabolic Anderson model.

In this article, we will prove the existence of Lyapunov exponents for stochastic differential equations with interaction and prove an analogue to the Liouville formula for such equations. Finally we will prove, as the main result, existence of intermittency for the mass distribution, for initial measures with compact support. 
 Correspondingly this article is build as follows, in the second chapter we discuss the notion of stochastic differential equation with interactions and emphasise on the diffeomorphism property for the system \eqref{Einl}. The third chapter is devoted to the existence of Lebesgue densities for $\mu_t$, when $\mu_0$ has a Lebesgue density and discuss properties thereof. In the last chapter we define intermittency and show that under dissipativity conditions imposed on the coefficients, we achieve intermittency whenever the initial measure $\mu_0$ has compact support. 

\section{Stochastic Differential Equations with Interaction}
The main object of investigation are solutions to stochastic differential equations with interaction introduced in \cite{Dorogovtsev2007}. Before we start, we will introduce some notation involving the space of probability measures on $\Rd$ where from now on $d\ge 1$ shall be fixed. Let $\mathfrak{M}(X)$ denote the space of all probability measures on $X$ where $X$ is a topological space equipped with the corresponding Borel $\sigma$-algebra $\mathcal{B}(X)$. For simplicity we will denote $\fancym(\Rd)$ by $\fancym$ and $\mathcal{B}(\Rd)$ by $\mathcal{B}$. We will now introduce the notation for the metrisability of $\fancym$. 
\begin{definition}
Let $\mu,\nu \in \fancym $ then 
\begin{align*}
    C(\mu,\nu) := \{ \kappa \in \fancym(\Rd \times \Rd) \vert \kappa(\cdot\times \Rd)= \mu(\cdot), \kappa(\Rd \times \cdot)= \nu(\cdot)\}  
\end{align*}
is called space of couplings with marginals $\mu$ and $\nu$. 
\end{definition}
We can now define the Wasserstein space 
\begin{definition}
Define 
\begin{align*}
    \gamma: \fancym \times \fancym &\to \R \\
    (\mu,\nu) &\mapsto \gamma(\mu,\nu):= \inf_{\kappa \in C(\mu,\nu)} \underset{\Rd \times \Rd}{\int\int} \frac{\abs{u-v}}{1+\abs{u-v}} \kappa(\dd u, \dd v)
\end{align*}
$\gamma$ is called Wasserstein distance. 
\end{definition}

\begin{definition}
The space $(\fancym, \gamma)$ is called Wasserstein space 
\end{definition}
\begin{bemerkung}
It is well known that $(\fancym,\gamma)$ is a Polish space (e.g. \cite{villani2009optimal}).
\end{bemerkung}

\begin{definition}[\cite{Dorogovtsev2007}]
Let $(\Omega,\F{}{},\W)$ be a complete probability space and let $(B_k)_{k\ge 0}$ be $d$-dimensional Brownian motions. Let furthermore $a: \Rd \times \fancym \times \R_+ \to \Rd $ and $b_k: \Rd \times \fancym\times \R_+ \to \R^{d\times d}$ be measurable functions and $\mu_0 \in \fancym$, then the following stochastic differential equation 
\begin{align}\label{DefSDEint}
    \begin{cases}
    d x(u,t)&= a(x(u,t),\mu_t,t) \dd t + \sum_{k=0}^\infty b_k(x(u,t),\mu_t,t) \dd B_k(t)\\
\forall u\in \Rd \quad   x(u,0)&= u \\
\mu_t &= \mu_0\circ{x^{-1}(\cdot,t)} 
     \end{cases}
\end{align}
is called stochastic differential equation with interaction.  
\end{definition}

The key difference and difficulty compared to the usual  stochastic differential equations here, lies in the fact that not only does the value of the solution depend on the evolution in time but also on the evolution of the distribution in the space. \newline
In order to discuss well-posedness of such quantities we will first define strong solutions to  stochastic differential equations with interaction.
\begin{definition}[\cite{Dorogovtsev2007}] \label{wellposedness}
Consider the SDE situation in \eqref{DefSDEint} a map 

\begin{align*}
x&:\Rd \times \R_+\times \Omega \to \Rd\\
&(u,t,\omega) \mapsto x(u,t)(\omega)
\end{align*}
is called a unique strong solution to the stochastic differential equation with interaction \eqref{DefSDEint} if 
\begin{enumerate}[label=\roman*)]
    \item $x$ is measurable with respect to $\B\times \B([0,t]) \times \F{}{t}$ for all $t\ge 0$ on each interval $[0,t]$.
    \item $x$ suffices \eqref{DefSDEint} in the integral form. 
    \item Let $\Tilde{x}$ suffice $i)$ and $ii)$, then 
    \begin{align*}
        \W( \forall u\in \Rd,t\ge 0,x(u,t)=\Tilde{x}(u,t)) = 1
    \end{align*}
    holds.
\end{enumerate}
\end{definition}
 It turns out, that under similar assumptions as in the case of SDEs without interaction, we can obtain a well posedness result. 
\begin{theorem}\cite{Dorogovtsev2007}\label{ThExiSDE}
Consider the problem \eqref{DefSDEint}, there exists a solution in the notion of Definition \ref{wellposedness}, if  $a$ and $(b_k)_{k\ge 0}$ are jointly continuous and  
    there exists a constant $C>0$, such that for all $u,v\in \Rd$,$\mu,\nu\in \fancym$ and all $t\in [0,T]$ where $T>0$ is arbitrary, such that
    
    \begin{align*}
        \abs{a(u,\mu,t)-a(v,\nu,t)}+ \left(\sum_{k=0}^\infty \norm{b_k(u,\mu,t)-b_k(v,\nu,t)}_{HS}^2\right)^{\frac{1}{2}}\le C(\abs{u-v}+\gamma(\mu,\nu))
     \end{align*}
     where $\norm{\cdot}_{HS}$, denotes the Hilbert-Schmidt norm and
     \begin{align*}
         \abs{a(u,\mu,t)} + \left( \sum_{k=0}^\infty \norm{b_k(u,\mu,t)}^2_{HS}\right)^{\frac{1}{2}}\le C(1+\abs{u}) 
     \end{align*}
     
\end{theorem}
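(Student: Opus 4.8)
The plan is to run a Picard--Lindelöf iteration adapted to the measure feedback. Fix $T>0$. I would set $x^{(0)}(u,t):=u$ and inductively
\begin{align*}
 x^{(n+1)}(u,t) := u + \int_0^t a\big(x^{(n)}(u,s),\mu^{(n)}_s,s\big)\,\dd s + \sumkinfty \int_0^t b_k\big(x^{(n)}(u,s),\mu^{(n)}_s,s\big)\,\dd B_k(s),
\end{align*}
with $\mu^{(n)}_s:=\mu_0\circ(x^{(n)}(\cdot,s))^{-1}$. A preliminary Gronwall argument using the linear growth bound gives $\sup_n\sup_{u}\E[\sup_{s\le T}|x^{(n)}(u,s)|^2]<\infty$, which in particular makes $\sumkinfty\norm{b_k(x^{(n)}(u,s),\mu^{(n)}_s,s)}_{HS}^2\le C^2(1+|x^{(n)}(u,s)|)^2$ integrable, so the series of stochastic integrals converges in $L^2$ and the iteration is well defined; joint measurability of $(u,s,\omega)\mapsto x^{(n)}(u,s)(\omega)$ would be checked by induction from the continuity of $a$ and $(b_k)_{k\ge0}$.

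The device that lets the interaction term be handled is the following pathwise bound: for measurable $f,g\colon\Rd\to\Rd$ the pushforward of $\mu_0$ under $u\mapsto(f(u),g(u))$ is a coupling of $\mu_0\circ f^{-1}$ and $\mu_0\circ g^{-1}$, so
\begin{align*}
 \gamma\big(\mu_0\circ f^{-1},\mu_0\circ g^{-1}\big)\le\int_{\Rd}\frac{|f(u)-g(u)|}{1+|f(u)-g(u)|}\,\mu_0(\dd u)\le\Big(\int_{\Rd}|f(u)-g(u)|^2\,\mu_0(\dd u)\Big)^{1/2},
\end{align*}
whence $\E[\gamma(\mu^{(n)}_s,\mu^{(m)}_s)^2]\le\sup_{v}\E[|x^{(n)}(v,s)-x^{(m)}(v,s)|^2]$. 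With $\Delta_n(t):=\sup_u\E[\sup_{r\le t}|x^{(n+1)}(u,r)-x^{(n)}(u,r)|^2]$, Doob's and the Burkholder--Davis--Gundy inequality on the martingale part, Cauchy--Schwarz on the drift, the Lipschitz hypothesis, and this bound give a constant $C_T$ with $\Delta_n(t)\le 2C_T\int_0^t\Delta_{n-1}(r)\,\dd r$; iterating yields $\Delta_n(T)\le(2C_TT)^n\Delta_0(T)/n!$, so $\sum_n\Delta_n(T)^{1/2}<\infty$ and $(x^{(n)})$ is Cauchy uniformly in $u$. I would take its limit $x$, argue it is adapted and jointly measurable, and pass to the limit in the recursion — using continuity of $a,(b_k)$ to identify the limiting coefficients with $a(x(u,\cdot),\mu_\cdot,\cdot)$, $b_k(x(u,\cdot),\mu_\cdot,\cdot)$ for $\mu_\cdot=\mu_0\circ x^{-1}(\cdot,\cdot)$ — so that $x$ solves \eqref{DefSDEint} in integral form. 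Higher-moment BDG estimates then give $\E[|x(u,t)-x(v,s)|^{2p}]\le C(|u-v|^{2p}+|t-s|^p)$ for large $p$, and Kolmogorov's continuity theorem provides a modification continuous in $(u,t)$, giving Definition \ref{wellposedness}(i)--(ii).

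For uniqueness I would compare two solutions $x,\tilde x$ with associated measures $\mu_\cdot,\tilde\mu_\cdot$: the same ingredients give, for $\phi(t):=\sup_u\E[\sup_{r\le t}|x(u,r)-\tilde x(u,r)|^2]$ (finite on $[0,T]$ by the a priori bounds), the inequality $\phi(t)\le 2C_T\int_0^t\phi(r)\,\dd r$, hence $\phi\equiv0$ by Gronwall, and continuity in $(u,t)$ of both fields upgrades this to $\W(\forall u\in\Rd,t\ge0:\ x(u,t)=\tilde x(u,t))=1$, i.e.\ Definition \ref{wellposedness}(iii). I expect the main obstacle to be precisely the interplay of the measure feedback with the spatial variable: because $\mu_0$ need not be compactly supported one cannot simply take suprema over $u$ of the state, and it is the $L^2(\mu_0)$-against-$\gamma$ estimate above that keeps a single scalar Gronwall argument workable. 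Secondary technical points are the uniform-in-$n$ second-moment bound needed for summability of the noise series and for convergence of the infinite sum of stochastic integrals, and the bookkeeping required to make the joint-measurability and the "for all $u,t$ simultaneously" clauses of Definition \ref{wellposedness} literally true.
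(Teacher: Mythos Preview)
The paper does not prove this theorem: it is quoted from \cite{Dorogovtsev2007} without argument, so there is no proof in the text to compare your attempt against. Your Picard iteration combined with the coupling bound on $\gamma$ is the natural strategy and matches in spirit what one finds in that reference.

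There is, however, a genuine gap in your execution. You assert that a preliminary Gronwall argument gives $\sup_n\sup_u\E[\sup_{s\le T}|x^{(n)}(u,s)|^2]<\infty$, but under the linear growth hypothesis the correct bound is only $\sup_n\E[\sup_{s\le T}|x^{(n)}(u,s)|^2]\le C_T(1+|u|^2)$, which blows up as $|u|\to\infty$. The same issue infects $\Delta_0(T)=\sup_u\E[\sup_{r\le T}|x^{(1)}(u,r)-u|^2]$: since $x^{(1)}(u,r)-u=\int_0^r a(u,\mu_0,s)\,\dd s+\sum_k\int_0^r b_k(u,\mu_0,s)\,\dd B_k(s)$ and both $|a(u,\mu_0,s)|$ and $(\sum_k\|b_k(u,\mu_0,s)\|_{HS}^2)^{1/2}$ are only bounded by $C(1+|u|)$, one gets $\Delta_0(T)\asymp\sup_u(1+|u|)^2=+\infty$ unless the coefficients happen to be bounded. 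Your iterated estimate $\Delta_n(T)\le(2C_TT)^n\Delta_0(T)/n!$ is then vacuous, and the contraction argument collapses.

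A standard repair is to replace the spatial supremum by the $\mu_0$-average, working with $\int_{\Rd}\E[\sup_{r\le t}|x^{(n+1)}(u,r)-x^{(n)}(u,r)|^2]\,\mu_0(\dd u)$; this closes under exactly the coupling estimate you wrote, but only if $\int|u|^2\,\mu_0(\dd u)<\infty$, an assumption the theorem does not make. Without any moment on $\mu_0$ one has to exploit that the metric $\gamma$ is bounded by $1$ and run a two-tier argument: first obtain a contraction for the measure-valued iterates in the bounded Wasserstein metric (where the initial discrepancy is automatically finite), and then, with $(\mu_t)$ determined, solve the resulting SDE with random Lipschitz coefficients for each $u$ separately. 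Either way, the single scalar Gronwall in $\sup_u$ that you wrote does not literally go through.
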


It is clear that, in case of existence of solutions, x is a solution to a SDE with random coefficients $\Tilde{a}(u,t)=a(u,\mu_t,t)$ and $\Tilde{b}_k(u,t,p)=b_k(u,\mu_t,t,p)$ , for all $t\ge 0$ and $k\ge 0$. Here $(\mu_t)_{t\ge0 }$ is the fixed measure valued process which is induced from the solution to the SDE with interaction. These random coefficients are almost surely Lipschitz for all $t\ge 0$ with respect to $u\in \Rd$. Therefore we can conclude similar properties as in the case of usual SDEs
\begin{corollary}
Consider the situation of Theorem \ref{ThExiSDE}, then the following relations hold.

    \begin{enumerate}[label=$\roman*)$]
        \item There exists a constant $C_T>0$ such that for all $p\ge 1$
        \begin{align*}
       \forall u\in \Rd \quad     \E(\sup_{0\le t\le T} \abs{x(u,t)}^p) \le C(1+\abs{u}^p)
        \end{align*}
        holds.
        \item There exists a constant $C_T>0$ such that for all $p\ge 1$
        \begin{align*}
        \forall u,v\in \Rd\quad    \E(\sup_{0\le t\le T} \abs{x(u,t)-x(v,t)}^p)\le C(\abs{u-v}^p)
        \end{align*}
    \end{enumerate}
\end{corollary}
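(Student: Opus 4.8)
The plan is to reduce both estimates to the classical moment bounds for Itô SDEs with random (adapted) coefficients, exploiting the observation already recorded in the text: once the measure-valued process $(\mu_t)_{t\ge 0}$ is frozen, $x(\cdot,t)$ solves a genuine SDE with coefficients $\tilde a(u,t)=a(u,\mu_t,t)$, $\tilde b_k(u,t)=b_k(u,\mu_t,t)$ that are, by Theorem \ref{ThExiSDE}, almost surely Lipschitz in $u$ with a deterministic constant $C$ and satisfy the linear growth bound $\abs{\tilde a(u,t)}+(\sum_k\norm{\tilde b_k(u,t)}_{HS}^2)^{1/2}\le C(1+\abs{u})$ uniformly in $\omega$ and $t\in[0,T]$. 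Thus the randomness of the coefficients is harmless: all the constants appearing are deterministic, so the standard proofs go through verbatim.

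For part $i)$, I would fix $u\in\Rd$, write the integral form of \eqref{DefSDEint}, and for $p\ge 2$ apply the triangle inequality in $L^p$, then Jensen (or Hölder) to the drift term and the Burkholder--Davis--Gundy inequality to the martingale part $\sum_k\int_0^{\cdot}\tilde b_k(x(u,s),s)\,\dd B_k(s)$, bounding its quadratic variation by $\int_0^t\sum_k\norm{\tilde b_k(x(u,s),s)}_{HS}^2\,\dd s\le C^2\int_0^t(1+\abs{x(u,s)})^2\,\dd s$. This yields
\begin{align*}
\E\Big(\sup_{0\le s\le t}\abs{x(u,s)}^p\Big)\le C_T\Big(1+\abs{u}^p+\int_0^t\E\big(\sup_{0\le r\le s}\abs{x(u,r)}^p\big)\,\dd s\Big),
\end{align*}
and Grönwall's lemma gives the claim for $p\ge 2$; the case $1\le p<2$ follows by Jensen's inequality. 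A preliminary localisation by stopping times $\tau_N=\inf\{t:\abs{x(u,t)}\ge N\}$ is needed to ensure the expectations are finite before applying Grönwall, after which one lets $N\to\infty$ by monotone convergence. Part $ii)$ is analogous: subtract the integral equations for $x(u,\cdot)$ and $x(v,\cdot)$, use that $\tilde a,\tilde b_k$ are Lipschitz in the spatial variable with deterministic constant $C$ (so the $\mu_t$-dependence cancels, since both particles see the same measure flow at each time), apply BDG and Grönwall in the same way, and obtain $\E(\sup_{0\le t\le T}\abs{x(u,t)-x(v,t)}^p)\le C\abs{u-v}^p$.

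The only genuinely delicate point is justifying that the classical SDE moment estimates apply despite the coefficients being random: one must check that $\tilde a(u,\cdot)$ and $\tilde b_k(u,\cdot)$ are progressively measurable — which holds because $t\mapsto\mu_t$ is $\F{}{t}$-adapted by construction of the solution and $a,b_k$ are jointly measurable — and that the Lipschitz and growth constants are non-random, which is exactly the content of the hypotheses of Theorem \ref{ThExiSDE}. Once this is in place, no new ideas are required beyond the standard Itô-calculus toolkit (BDG, Grönwall, localisation), so I expect the write-up to be short and the main obstacle to be purely bookkeeping: tracking that all constants depend only on $T$, $p$, $C$ and $d$, and not on $\omega$.
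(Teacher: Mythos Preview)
Your proposal is correct and follows essentially the same route as the paper: Burkholder--Davis--Gundy on the martingale part, the linear-growth (resp.\ Lipschitz) bound to control the integrands, and then Grönwall applied to $g(t)=\E(\sup_{0\le s\le t}\abs{x(u,s)}^p)$. The paper's proof is in fact terser than yours---it omits the localisation step, the separate treatment of $1\le p<2$, and the measurability discussion---so your plan covers everything it does and a bit more.
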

\begin{beweis}
The proof of these assertions is a simple consequence of Grönwall's Lemma. 
"$i)$" Let $t\le T$. The Burkholder-Davis-Gundy inequality yields
\begin{align*}
&E(\sup_{0\le s\le t} \abs{x(u,s)}^p)
\\\le& Ct \left(\abs{u}^p+\E(\int_0^t \abs{a(x(u,s),\mu_s,s)}^p \dd s) + \E(\int_0^t \left(\sum_{k=0}^\infty \abs{b_k(x(u,s,\mu_s,s))}^2)\right)^{\frac{p}{2}} \dd s \right)  
\\\le& Ct\Big(\abs{u}^p + \E\left(\int_0^t (1+\abs{x(u,s)}^p) \dd s\right)\Big).
\end{align*}
 The claim is obtained by applying the Grönwall Lemma to 
\begin{align*}
    g(t)= \E(\sup_{0\le s\le t} \abs{x(u,s)}^p).
\end{align*}
We thus get 
\begin{align*}
    \E(\sup_{0\le t\le T }\abs{x(u,t)}^p) \le C(1+\abs{u}^p)
\end{align*}
for all $T\ge 0$ where C depends on $T$. The result of $"ii)"$ is obtained in exactly the same way. 
\end{beweis}
\begin{beispiel}
Consider a Lipschitz bounded function $f_l:\Rd \times \R^{dl} \to \Rd $ and $l\ge 1$, then 
\begin{align*}
    a(u,\mu):= \sum^n_{k=1}\int_{\Rd}\dots \int_{\Rd} f_k(u,v_1,\dots,v_k) \mu(\dd v_1)\dots\mu(\dd v_k) 
\end{align*}
suffices the Lipschitz property in Theorem \ref{ThExiSDE}.
\end{beispiel}
\begin{beweis}
In order to reduce notation we will only prove the case in which the functions $f_l$ for all $l\in \{0,\dots, n\}$
are independent of $s$. The proof can be carried out in exactly the same way as below. We begin by estimating the following.
\begin{align*} 
   \Delta&:= \sum_{k=1}^n \int_{\Rd}\dots \int_{\Rd} f_k(u_1,v_1,\dots v_k) \mu(\dd v_1),\dots,\mu(\dd v_k) \\&- \int_{\Rd}\dots \int_{\Rd} f_k(u_2,v_1,\dots v_k) \nu(\dd v_1),\dots,\nu(\dd v_k)
\end{align*}
for all $u_1,u_2 \in \Rd, \mu,\nu \in \mathfrak{M}$ Let $\kappa$ be a coupling on $\Rd\times\Rd$  such that its marginal distributions coincide with $\mu$ and $\nu$. Denote $\phi(r)= \frac{r}{1+r}$ for all $r\ge 0$. Then 
\begin{align}\label{Delta Absch}\begin{split}
    \abs{\Delta} &\le\sum_{k=1}^n  \underset{\Rd \times \Rd}{\int\int} \dots  \underset{\Rd \times \Rd}{\int\int}\big\vert f_k(u_1;v_1,\dots,v_k)-f_k(u_1,v^\prime_1,\dots,v^\prime_k)\big\vert \\&+ \abs{f_k(u_1;v^\prime_1,\dots,v^\prime_k)-f_k(u_2,v^\prime_1,\dots,v^\prime_k)} \kappa(\dd v_1, v^\prime_1),\dots, \kappa(\dd v_k, v^\prime_k) \\&\le\sum_{k=1}^n  \underset{\Rd \times \Rd}{\int\int} \dots  \underset{\Rd \times \Rd}{\int\int}\abs{f_k(u_1;v_1,\dots,v_k)-f_k(u_1,v^\prime_1,\dots,v^\prime_k)}\\&\kappa(\dd v_1, v^\prime_1),\dots, \kappa(\dd v_k, v^\prime_k)+C\abs{u_1-u_2}\\\overset{*}&{\le} \sum_{k=1}^n \Tilde{C} \underset{\Rd \times \Rd}{\int\int} \dots  \underset{\Rd \times \Rd}{\int\int}\phi(\sum_{l=1}^k\abs{v_l-v\prime_l})\kappa(\dd v_1, v^\prime_1),\dots, \kappa(\dd v_k, v^\prime_k)\\& +C\abs{u_1-u_2}\\\overset{}&{\le}\sum_{k=1}^n \Tilde{C}\sum_{l=1}^k \underset{\Rd \times \Rd}{\int\int} \dots  \underset{\Rd \times \Rd}{\int\int}\phi(\abs{v_l-v^
    \prime_l})\kappa(\dd v_1, v^\prime_1),\dots, \kappa(\dd v_k, v^\prime_k)\\& +C\abs{u_1-u_2}.\end{split}\end{align}Here $*$ can be  obtained as follows: consider $(v_1,\dots,v_k),(v^\prime_1,\dots,v^\prime_k) \in \R^k$ for all $k\le n$ such that $\sum_{l=1}^k\abs{v_1-v^\prime_1}>\eps$ for $\eps>0$ fixed. Boundedness of $f$ and monotonicity of $\phi$ imply: 
    \begin{align*}
        \abs{f_l(u_1;v_1,\dots,v_k)-f_l(u_1,v^\prime_1,\dots,v^\prime_k)}\le \underbrace{\frac{2\sup_{l=1,\dots,n ;x\in \R^{k+1}} \abs{f_l(x)}}{\phi(\eps)}}_{:=C_1} \phi(\eps)\le C_1 \phi(\sum_{l=1}^k\abs{v_l-v^\prime_l}).
    \end{align*}
    On the other hand we can conclude for all $(v_1,\dots,v_k),(v^\prime_1,\dots,v^\prime_k) \in \R^{dk}$ such that $\sum_{l=1}^k\abs{v_l-v^\prime_l}\le \eps$
    \begin{align*}
        \abs{f(u_1;v_1,\dots,v_k)-f(u_1,v^\prime_1,\dots,v^\prime_k)}\le C \sum_{l=1}^k \abs{v_l-v_l^\prime} \le \underbrace{(1+\eps)C}_{:=C_2} \phi(\sum_{l=1}^k \abs{v_l-v_l^\prime}) 
    \end{align*}
    by Lipschitz continuity of $f$.
    Hence we get
    \begin{align*}
         \abs{f(u_1;v_1,\dots,v_k)-f(u_1,v^\prime_1,\dots,v^\prime_k)}\le \max\{C_1,C_2\}\phi(\sum_{l=1}^k \abs{v_l-v_l^\prime}).
    \end{align*}
    Thus \eqref{Delta Absch} finally yields 
    \begin{align*}
        \abs{\Delta} \le K(\abs{u_1-u_2} + \gamma(\mu,\nu)) 
    \end{align*}
    for some constant $K>0$, since $\kappa$ was an arbitrary coupling.

\end{beweis}
Since the notion of stochastic differential equations with interaction is established, we want to go further and investigate the properties of the measure valued process $(\mu_t)_{t\ge 0}$. Her we consider the following question. If the initial data $\mu_0$ is absolutely continuous with respect to the Lebesgue measure on $\Rd$, is $\mu_t$ also absolutely continuous with respect to the Lebesgue measure? In order to give an answer to this question, let us first observe the following relation. Let $x$ be a solution to a SDE with interaction and assume that $x(\cdot,t)$ is as smooth as needed for all $t\ge 0$ and let $p_0$ be the Lebesgue density of $\mu_0$. Then we have for a bounded measurable function $\phi:\Rd\to \Rd$ the following:

\begin{align}
\begin{split}\label{Ideediffeo}
\int_{\Rd} \phi(u) \mu_t(\dd u) &= \int_{\Rd} \phi(x(u,t)) \mu_0(\dd u)   \\
&= \int_{\Rd} \phi(x(u,t)) p_0(u) \dd u = \int_{\Rd} \phi(u) p_0(x^{-1}(u,t))\dett(Dx^{-1}(u,t)) \dd u 
\end{split}
\end{align}
Hence we get, that if $x(\cdot,t)$ is diffeomorphic, then $\mu_t$ is absolutely continuous with respect to the Lebesgue measure for all $t\ge 0$ almost surely. More over the density, which we will denote by $(p_t)_{t\ge 0}$ is of the explicit form 
\begin{align*}
    p_t(u) = p_0(x^{-1}(u,t)) \abs{\det(Dx^{-1}(u,t))}
\end{align*}
\subsection{Stochastic Flow of Diffeomorphisms with Interaction}
We will show in this section, that the desired diffeomorphism property holds, for solutions to the problem \eqref{DefSDEint}. We will utilise the tools given by the theory of so called stochastic flows, established by Hiroshi Kunita in \cite{kunita1997}.
\begin{definition}[\cite{kunita1997},p.114]
a family of random maps $(\phi_{s,t})_{s\le t\in [0,T]}$ for $T\ge 0$ such that $\phi_{s,t}:\Rd \times \Omega \to \Rd$ is called a forward stochastic flow of $\ccc^1$-diffeomorphism, if there exists a $\W$-null set $N$, such that for all $\omega \in N^c$, we have
\begin{enumerate}[label=$\roman*)$]
    \item $\phi_{s,u}(\omega)\equiv \phi_{t,u}(\omega) \circ \phi_{s,t}(\omega)$ for all $s\le u, t\le u, s\le t \in [0,T]$. 
    \item For all $u\in \Rd$ the property $\phi_{s,s}(u,\omega) =  u$ holds for all $s\ge 0$. 
    \item The map $\phi_{s,t}(\omega):\Rd \to \Rd$ is a $\ccc^1$-diffeomorphism for all $s\le t\in [0,T]$. 
    
\end{enumerate}
\end{definition}

We want to apply Theorem 4.6.5 in \cite{kunita1997}, which yields the existence of a version, to the solution $x$, such that this version is a flow of diffeomorphisms. In order to do this we have to transform our the problem \eqref{DefSDEint} into the language of Kunita (\cite{kunita1997}). 
\begin{assumption}\raisebox{\ht\strutbox}{\hypertarget{(A1)}{}} 
The coefficients $a$ and $b_k$ in \eqref{DefSDEint} suffice the conditions of Theorem \ref{ThExiSDE}.
\end{assumption}
Consider the problem \eqref{DefSDEint}, we can now fix the measure valued process $(\mu_t)_{t\ge 0}$ induced from the solution to the SDE with interaction. As already emphasised, the solution to the SDE with interaction can be written into a form of well known SDEs with random coefficients in the Kunita sense. Namely consider the semimartingale 

\begin{align*}
    F(u,t):= \int_0^t a(u,\mu_t,t) \dd t +\sum_{k=0}^\infty \int_0^t b_k(u,\mu_t,t) \dd B_k. 
\end{align*}
We shall fix this semiamrtingale $F$. From the definition, we can immediately tell that a solution $x$ to \eqref{DefSDEint} is also a solution to a SDE in the sense of Kunita \cite{kunita1997} namely
\begin{align*}
x(u,t)= u +\int_0^t F(x(u,t),\dd t).
\end{align*}
We will finally apply Theorem 4.5.6 in \cite{kunita1997}, in order to get a modification of solutions, which are  diffeomorphisms. It is natural to assume, that the coefficients need to be differentiable, in general we need to the following assumptions. 
\begin{assumption}\raisebox{\ht\strutbox}{\hypertarget{(A2)}{}} 
The coefficients $a$ and $(b_k)_{k\ge 0}$ fulfil the following properties for $C>0$
\begin{enumerate}[label=$\roman*)$]
    \item $a$ and $(b_k)_{k\ge0}$ are continuously differentiable, with respect to $u$ for all $\mu\in \fancym,t\ge0$.
     \item there exists $0<\delta\le 1$ such that for all $u,v\in \Rd,\mu\in \fancym$ and $t\ge0$
     \begin{align*}
        \abs{Da(u,\mu,t)-Da(v,\mu,t)}+ \left(\sum^\infty_{k=0} \norm{Db_k(u,\mu,t)-Db_k(v,\mu,t)}_{HS}^2\right)^{\frac{1}{2}} \le C \abs{u-v}^\delta
      \end{align*}
      where $\norm{\cdot}_{HS}$ denotes the Hilbert Schmidt norm in $\R^{d\times d\times d}$.
      \item For all  $t\ge 0$ we have 
      \begin{align*}
    \forall \mu \in \fancym \quad      \sup_{u\in \Rd} \left(\sum^\infty_{k=0} \norm{Db_k(u,\mu,t)}_{HS}^2\right)^{\frac{1}{2}}\le C
      \end{align*}
\end{enumerate}
\end{assumption}
with these assumptions at hand we obtain a version of diffeomorphisms.
\begin{theorem}\label{Thdiffeo}
Let the coefficients $a$ and $(b_k)_{k\ge 0}$ suffice the assumptions \hyperlink{(A1)}{(A1)} and \hyperlink{(A2)}{(A2)}, then there exists a version $\Tilde{x}$ still denoted by $x$ to the problem \eqref{DefSDEint}, such that $\phi_{s,t}=x_s(u,t)$ where 
\begin{align*}
    x_s(u,t)= u+ \int_s^t F(x_s(u,t),\dd t)
\end{align*}
 is a stochastic flow of diffeomorphisms

\end{theorem}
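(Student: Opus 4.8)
The plan is to freeze the measure-valued process and reduce \eqref{DefSDEint} to a classical stochastic differential equation driven by a Brownian semimartingale field, to which Kunita's theory of stochastic flows applies. First I would take the unique strong solution $x$ supplied by Theorem \ref{ThExiSDE} under \hyperlink{(A1)}{(A1)} and fix, once and for all on the $\W$-full event where it is well defined, the induced measure flow $(\mu_t)_{t\ge0}$. With $(\mu_t)_{t\ge0}$ now regarded as a fixed (random) input, define the random field
\begin{align*}
F(u,t):=\int_0^t a(u,\mu_s,s)\,\dd s+\sum_{k=0}^\infty\int_0^t b_k(u,\mu_s,s)\,\dd B_k(s),
\end{align*}
and check that it is a well-defined continuous semimartingale field in the sense of \cite{kunita1997}: the infinite series converges locally uniformly in $u$ (in $L^2$, say) because of the Hilbert–Schmidt growth bound in Theorem \ref{ThExiSDE}, and its local characteristic is the pair $\bigl(a(\cdot,\mu_t,t),\ \sum_{k\ge0}b_k(\cdot,\mu_t,t)\,b_k(\cdot,\mu_t,t)^{\!\top}\bigr)$. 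A solution of \eqref{DefSDEint} is then exactly a solution of the Kunita-type equation $x(u,t)=u+\int_0^t F(x(u,s),\dd s)$, as already observed in the excerpt.

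The core of the argument is to verify that, for $\W$-almost every $\omega$, the local characteristic of $F$ lies in the regularity class demanded by Kunita's flow-of-diffeomorphisms theorem, i.e. it is bounded and $\ccc^{1,\delta}$ in the space variable with uniformly controlled norms. The Lipschitz estimate of \hyperlink{(A1)}{(A1)} gives $\abs{Da(u,\mu,t)}\le C$ and makes $\bigl(\sum_k\norm{Db_k(u,\mu,t)}_{HS}^2\bigr)^{1/2}$ locally bounded; the linear-growth estimate controls the behaviour at infinity; and \hyperlink{(A2)}{(A2)} supplies continuous differentiability in $u$, the uniform $\delta$-Hölder continuity of the spatial derivatives, and the uniform bound on $\sum_k\norm{Db_k}_{HS}^2$. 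The point is that all of these bounds are uniform in $\mu\in\fancym$ and in $t$, so they survive the substitution $\mu\mapsto\mu_t(\omega)$; joint continuity from Theorem \ref{ThExiSDE} yields joint measurability in $(u,t,\omega)$ and predictability of the coefficient field. One also has to note that the series of derivative fields $\sum_k Db_k$ converges in the relevant topology, so that $F$ itself is $\ccc^{1,\delta}$ as a field.

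Granting this, Kunita's theorem on stochastic flows of diffeomorphisms (\cite{kunita1997}, Theorem 4.6.5) applies to the two-parameter family $x_s(u,t)=u+\int_s^t F(x_s(u,r),\dd r)$ and produces a modification $(\phi_{s,t})_{s\le t\in[0,T]}$ which is a forward stochastic flow of $\ccc^1$-diffeomorphisms: the cocycle identity $\phi_{s,u}=\phi_{t,u}\circ\phi_{s,t}$, the normalisation $\phi_{s,s}=\mathrm{id}$, and the $\ccc^1$-diffeomorphism property all come from that theorem. To finish, I would identify this flow with a version of the original solution: for $s=0$ the process $t\mapsto\phi_{0,t}(u)$ solves the same Kunita equation with the same frozen coefficients as $x(u,\cdot)$, so the pathwise-uniqueness clause iii) of Definition \ref{wellposedness} forces $\phi_{0,t}(u)=x(u,t)$ for all $u\in\Rd$ and $t\ge0$ simultaneously, $\W$-almost surely. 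Hence $\tilde x:=\phi_{0,\cdot}$ is the claimed version, and $x_s(u,t):=\phi_{s,t}(u)$ exhibits the flow structure.

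I expect the \textbf{main obstacle} to be the bookkeeping in the middle step: translating the deterministic, uniform-in-$\mu$ hypotheses \hyperlink{(A1)}{(A1)}--\hyperlink{(A2)}{(A2)} into a statement about the \emph{random} local characteristic of $F$ that matches verbatim the regularity class in Kunita's theorem, in particular controlling the infinite sum of Brownian stochastic integrals together with its spatial derivatives in the topology Kunita uses, and securing the predictability and joint measurability of the frozen-coefficient field. Once the characteristic is shown to be of the required $\ccc^{1,\delta}$-bounded type, the remainder is a direct citation together with the uniqueness argument.
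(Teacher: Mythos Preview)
Your proposal is correct and follows essentially the same route as the paper: freeze $(\mu_t)_{t\ge0}$, form the Brownian semimartingale field $F$, verify that its local characteristic $\bigl(a(\cdot,\mu_t,t),\,g(u,v,t)=\sum_k b_k(u,\mu_t,t)b_k(v,\mu_t,t)^{\!*}\bigr)$ lies in Kunita's $\ccc^{1,\delta}$ regularity class uniformly in $(t,\omega)$, and invoke the flow-of-diffeomorphisms theorem. The paper carries out the bookkeeping you flag as the main obstacle in slightly more detail---in particular it works with Kunita's two-variable seminorm $\norm{g}'_{1+\delta}$ explicitly and includes a uniform-convergence-on-compacts argument to justify interchanging $D^\alpha$ with the infinite sum $\sum_k$---but the structure and the key citation are the same.
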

\begin{beweis}
We have to show that $(F(\cdot,t))_{t\ge0 }$ suffices the regularity conditions of Theorem 4.5.6 in \cite{kunita1997}. For this we will use the exact notation of \cite{kunita1997} in chapter $3$ and $4$. Consider the (semi-)norms for functions on the space of $m$-times continuously differentiable bounded functions $C^m(\Rd, \R^e)$ where $e\ge 1$. First note for multi indices with non-negative integers $\alpha=(\alpha_1,\cdot,\alpha_d)$, we shall use the notation $D^\alpha$ or $D^\alpha_u$ where 
\begin{align*}
    D^\alpha_x = \frac{\partial^{\abs{\alpha}}}{(\partial x_1)^{\alpha_1}\dots(\partial x_d)^{\alpha_d}},
\end{align*}
with $\abs{\alpha}= \sum_{i=1}^d \alpha_i$. We set 
\begin{align*}
    \norm{f}_m = \sup_{u\in K} \frac{\abs{f(u)}}{(1+\abs{u})} + \sum_{1\le \abs{\alpha}\le m} \sup_{u\in K} \abs{D^\alpha f(u)}
\end{align*}
for any $K\subset \Rd$.
Further define the space $\ccc^{m,\delta}(\Rd,\R^e)$ as the functions $f\in \ccc^m(\Rd,\R^e)$ such that $D^\alpha f$ is $\delta$-Hölder continuous for all $\abs{\alpha}=m$. Then define
\begin{align*}
    \norm{f}_{m+\delta,K} = \norm{f}_{m,K} +\sum_{\abs{\alpha}=m}\sup_{u,v\in K,u\neq v} \frac{\norm{D^\alpha f(u)-D^\alpha f(v)}_{HS}}{\abs{u-v}^\delta}.
\end{align*}
  Analogously we can define the same for functions $g:\Rd\times \Rd\to \R^{e}$ by 
  \begin{align*}
      \norm{g}^\prime_{m:K} = \sup_{u,v} \frac{\abs{g(u,v)}}{(1+\abs{u})(1+\abs{v})} + \sum_{1\le\abs{\alpha}\le m} \sup_{u,v\in K} \norm{D^\alpha_u D^\alpha_v g(u,v)}_{HS}
  \end{align*}
and 
\begin{align*}
    \norm{g}^\prime_{m+\delta:K} = \norm{g}^\prime_{m:k} +\sum_{\abs{\alpha}=m} \sup_{\underset{u_1\neq u_2, v_1\neq v_2}{u_1,u_2,v_1,v_2\in K} } \frac{\norm{D^\alpha_uD^\alpha_v\left( g(u_1,v_1)-g(u_2,v_1)-g(u_1,v_2)+g(u_2,v_2)\right)}_{HS}}{\abs{u_1-u_2}^\delta \abs{v_1-v_2}^\delta}. 
\end{align*}
Consider now the semimartingale $F(\cdot,t)$ it can be desomposed in $A(\cdot,t)= \int_0^t a(\cdot,\mu_s,s) \dd s$ and $M(\cdot,t)= \sum_{k=0}^\infty \int_0^t b_k(\cdot,\mu_s,s) \dd B_k(s)$. In order to apply Theorem 4.5.6. \cite{kunita1997} it suffices to show that there exists a constant $C>0$ such that almost surely  
\begin{enumerate}[label=$\roman*)$]
    \item For all $T>0$  
    \begin{align*}
        \norm{a(\cdot,\mu_t,t)}_{1+\delta,\Rd} \le C 
    \end{align*}
    for all $t\in [0,T]$.
    \item For all $T>0$ we have for $g(u,v,t)= \sum_{k=0}^\infty b_k(u,\mu_t,t) b^*_k(v,\mu_t,t)$ where $*$ denotes the adjoint. For all $u,v\in \Rd$ and $t\in [0,T]$. 
    \begin{align*}
        \norm{g(\cdot,\cdot,t)}_{m+\delta:\Rd}
    \end{align*}

\end{enumerate}
$i)$ is obvious, from the assumptions \hypertarget{(A1)}{(A1)}  and \hypertarget{(A2)}{(A2)}, $ii)$ follows immediately. Observe first, that  by Assumption \hypertarget{(A2)}{(A2)} we can interchange differentiation and summation. In order to do so we only need to check, that the sum of partial derivatives converges uniformly on some compact. Let $R>0$ and consider the closed ball with radius R $\overline{B}_R(0)$. Let $\eps>0$ be arbitrary, by compactness we can find $u_1,\dots,u_n\in \overline{B}_R(0)$ such that $\overline{B}_R(0)\subset \bigcup^n_{i=1} B_\eps(u_i) $. Now one can find $N\in \N$ for all $i=1,\dots,n$ and all $j,p,l=1,\dots,d$ such that 
\begin{align*}
    \sum_{k=N+1}^\infty \abs{\frac{\partial b_k^{j,p}}{\partial u_l}(u_i,\mu,t)}^2<\eps
\end{align*}
for all $t\ge 0,\mu\in \fancym$ by assumption \hypertarget{(A2)}{(A2)} $iii)$. Hence we get for all $u \in B_R(0)$, we can find $i=1,\dots,n$ such that $\abs{u-u_i}<\eps
$ 
\begin{align*}
    &\left(\sum^\infty_{k=N+1} \abs{\abs{\frac{\partial b^{j,p}}{\partial u_l}(u,\mu,t)}^2} \right)^{\frac{1}{2}}\\
    &\le \left(\sum^\infty_{k=N+1} \abs{\frac{\partial b^{j,p}}{\partial u_l}(u,\mu,t)-\frac{\partial b^{j,p}}{\partial u_l}(u_i,\mu,t)}^2 \right)^{\frac{1}{2}} +\left( \sum^\infty_{k=N+1} \abs{\frac{\partial b^{j,p}}{\partial u_l}(u_i,\mu,t)}^2 \right)^{\frac{1}{2}} \le 2\eps.
\end{align*}
Hence the series converges uniformly and thus we can interchange differentiation and summation in the following way:

\begin{align*}
    D^\alpha_uD^\alpha_v g(u,v,t)= \sum_{k=0}^\infty D^\alpha_ub_k(u,\mu_t,t) D^\alpha_v\left( b_k(v,\mu_t,t)\right)^*
\end{align*}
 where $*$ denotes the adjoint operator.  for all $\abs{\alpha}\le1$. Then we arrive at 
\begin{align*}
    \abs{g(u,v,t)}\le  \normlp{b_k(v,\mu_t,t)} \normlp{b_k(u,\mu_t,t)} \le C(1+\abs{u})(1+\abs{v}),
\end{align*}
furthermore
\begin{align*}
    D^\alpha_uD^\alpha_v(g(u,v,t))\le \normlp{D^\alpha b_k(v,\mu_t,t)}\normlp{D^\alpha b_k(u,\mu_t,t)}\le C
\end{align*}
for all $u,v\in\Rd$ and all $t\ge 0$.  Finally, we have
\begin{align*}
   & \norm{D^\alpha_uD^\alpha_v\left( g(u_1,v_1)-g(u_2,v_1)-g(u_1,v_2)+g(u_2,v_2)\right)}_{HS} \\
   &= \normlp{D^\alpha_u b_k(u_1,\mu_t)-D^\alpha_u b_k(u_2,\mu_t)} \normlp{D^\alpha_v b_k(v_1,\mu_t)-D^\alpha_u b_k(v_2,\mu_t)}\\
   &\le C (\abs{u_1-u_2}^\delta \abs{v_1-v_2}^\delta).
\end{align*}

Hence we get 
\begin{align*}
    \norm{\Tilde{a}(\cdot,t)}_{1+\delta,\Rd} +\norm{g(\cdot,\cdot,t)}^\prime_{1+\delta,\Rd} \le C
\end{align*}
for some constant $C>0$ and for all $t\le T$, where $T\ge0$.
This yields, by Theorem 4.5.6 in \cite{kunita1997}, the existence of a version of $x$ such that $x(\cdot,t)$ is a diffeomorphism almost surely for all $t\ge 0$.
\end{beweis}
Theorem 3.3.3 in \cite{kunita1997} furthermore implies that the derivative solves a linear SDE with interaction.
\begin{corollary}\label{DifferentialSDE}
The derivative $Dx(u,t)$ solves the following SDE with interaction 

\begin{align*}
    dDx(u,t)&= Da(x(u,t),\mu_t,t)Dx(u,t)\dd t +\sum_{k=0}^\infty Db_k(x(u,t),\mu_t,t)\dd B_k(t)Dx(u,t)\\
  \forall u \in \Rd \quad  Dx(u,0) &= I_{d\times d}
\end{align*}
where $I_{d\times d}$ denotes the identity matrix in $d$-dimensions.
 \end{corollary}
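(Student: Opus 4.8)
The plan is to transfer the statement, as in the proof of Theorem \ref{Thdiffeo}, into the framework of Kunita \cite{kunita1997}. Once the measure-valued process $(\mu_t)_{t\ge 0}$ is fixed, the solution $x(u,t)$ solves the Kunita-type equation $x(u,t)=u+\int_0^t F(x(u,s),\dd s)$ driven by the semimartingale $F(u,t)=\int_0^t a(u,\mu_s,s)\,\dd s+\sum_{k=0}^\infty\int_0^t b_k(u,\mu_s,s)\,\dd B_k(s)$. Under Assumptions \hyperlink{(A1)}{(A1)} and \hyperlink{(A2)}{(A2)} the local characteristics of $F$ belong to the class $B^{1,\delta}$ exactly as verified in the proof of Theorem \ref{Thdiffeo}. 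Theorem 3.3.3 in \cite{kunita1997} then applies directly and states that the Jacobian $Dx(u,t)$ of the stochastic flow exists, is continuous, and satisfies the linearised equation obtained by formally differentiating the equation for $x$ in the initial variable $u$.

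Concretely, I would carry out the following steps. First, recall from the proof of Theorem \ref{Thdiffeo} that the version of $x$ we work with is a flow of $C^1$-diffeomorphisms, so $Dx(u,t)$ is well defined for all $u$ and $t$ almost surely. Second, write the equation for $x$ in the integral form $x(u,t)=u+\int_0^t a(x(u,s),\mu_s,s)\,\dd s+\sum_{k=0}^\infty\int_0^t b_k(x(u,s),\mu_s,s)\,\dd B_k(s)$, and invoke Kunita's differentiation theorem (Theorem 3.3.3 in \cite{kunita1997}), which permits differentiating under the stochastic integral sign because the coefficients are $C^1$ in the spatial variable with the Hölder bounds of \hyperlink{(A2)}{(A2)}; this yields
\begin{align*}
Dx(u,t)= I_{d\times d}+\int_0^t Da(x(u,s),\mu_s,s)Dx(u,s)\,\dd s+\sum_{k=0}^\infty\int_0^t Db_k(x(u,s),\mu_s,s)Dx(u,s)\,\dd B_k(s),
\end{align*}
which is precisely the integral form of the asserted SDE with interaction, with the initial condition $Dx(u,0)=I_{d\times d}$ coming from differentiating $x(u,0)=u$. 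Third, I would note that convergence of the series $\sum_{k=0}^\infty Db_k(x(u,s),\mu_s,s)Dx(u,s)\,\dd B_k(s)$ in the appropriate sense follows from the uniform bound in \hyperlink{(A2)}{(A2)}$\,iii)$ together with the moment estimates on $Dx(u,s)$ — the same uniform-convergence argument used in the proof of Theorem \ref{Thdiffeo} to interchange summation and differentiation applies here.

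The only genuine subtlety — and the step I expect to require the most care — is checking that the hypotheses of Theorem 3.3.3 in \cite{kunita1997} are met in the present setting with infinitely many driving Brownian motions and with the coefficients evaluated at the fixed random measure flow $(\mu_t)_{t\ge 0}$: one must confirm that the frozen coefficients $\tilde a(u,t)=a(u,\mu_t,t)$ and $\tilde b_k(u,t)=b_k(u,\mu_t,t)$ still give a semimartingale $F$ with local characteristics in $B^{1,\delta}$ uniformly on compacts in time, which is exactly what was established in the proof of Theorem \ref{Thdiffeo}; and that the infinite sum of local martingale parts is handled as a single Hilbert-space-valued martingale, so Kunita's theorem applies verbatim. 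Since all these bounds are uniform in $\mu\in\fancym$ by \hyperlink{(A2)}{(A2)}, passing to the fixed process $(\mu_t)_{t\ge 0}$ introduces no new difficulty, and the corollary follows.
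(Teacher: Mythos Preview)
Your proposal is correct and follows exactly the approach of the paper: the paper simply states that ``Theorem 3.3.3 in \cite{kunita1997} furthermore implies that the derivative solves a linear SDE with interaction'' and offers no further proof, so your argument is a faithful (and more detailed) elaboration of precisely this route. The additional care you take in noting that the $B^{1,\delta}$ bounds were already verified in the proof of Theorem \ref{Thdiffeo} and in handling the infinite sum of diffusion terms is appropriate and does not deviate from the paper's reasoning.
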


\section{Mass Distributions of Stochastic Flows with Interaction}
We will now turn our focus back to the main object of investigation. Namely the measure valued process $(\mu_t)_{t\ge 0}$ induced from the SDE with interaction \eqref{DefSDEint}. Now we have the tools at hand, such that we can show the existence of a Lebesgue density for $(\mu_t)_{t\ge 0}$ almost surely for all $t\ge 0$. 
\begin{theorem}
Consider the problem \eqref{DefSDEint} under the assumptions \hyperlink{(A1)}{(A1)} and \hyperlink{(A2)}{(A2)}. Assume that the initial measure $\mu_0$ is absolutely continuous with respect to the Lebesgue measure. Then also $(\mu_t)_{t\ge 0}$ is also absolutely continuous with respect to the Lebesgue measure for all $t\ge 0$ almost surely. 
\end{theorem}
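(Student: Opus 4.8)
The plan is to deduce everything from the diffeomorphism version produced in Theorem~\ref{Thdiffeo}, together with the elementary fact that a $C^1$-diffeomorphism of $\Rd$, being locally Lipschitz, maps Lebesgue-null sets to Lebesgue-null sets. So first I would replace $x$ by the version from Theorem~\ref{Thdiffeo}; that theorem supplies, on each interval $[0,T]$, a $\W$-null set outside of which $x(\cdot,t)$ is a $C^1$-diffeomorphism for every $t\in[0,T]$. Taking the union of these null sets over $T\in\N$ yields a single $\W$-null set $N$ such that, for every $\omega\in N^c$ and every $t\ge0$, the map $\varphi_t:=x(\cdot,t)(\omega):\Rd\to\Rd$ is a $C^1$-diffeomorphism.

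Next, fix $\omega\in N^c$ and $t\ge0$ and let $A\in\B$ with $\leb(A)=0$. By definition of the pushforward, $\mu_t(A)=\mu_0\big(\varphi_t^{-1}(A)\big)$. Since $\varphi_t^{-1}$ is again a $C^1$-diffeomorphism, hence locally Lipschitz, it carries the null set $A$ to a Lebesgue-null set, i.e. $\leb\big(\varphi_t^{-1}(A)\big)=0$; because $\mu_0\ll\leb$ this forces $\mu_0\big(\varphi_t^{-1}(A)\big)=0$, and therefore $\mu_t(A)=0$. As $A$ was an arbitrary Lebesgue-null Borel set, $\mu_t\ll\leb$ on $N^c$, which is exactly the assertion.

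Finally, to identify the density I would run the change-of-variables computation already sketched in \eqref{Ideediffeo}: for bounded measurable $\phi$ one has $\int\phi\,\dd\mu_t=\int\phi(x(u,t))p_0(u)\,\dd u=\int\phi(u)\,p_0(x^{-1}(u,t))\,\abs{\dett(Dx^{-1}(u,t))}\,\dd u$, the last step being the substitution $u\mapsto x(u,t)$, which is legitimate since $C^1$-regularity of the diffeomorphism suffices for the change-of-variables formula. This gives the explicit representation $p_t(u)=p_0(x^{-1}(u,t))\,\abs{\dett(Dx^{-1}(u,t))}$; joint measurability of $(u,t)\mapsto x^{-1}(u,t)$, and hence of $p_t$, follows from continuity of the Kunita flow and of its inverse in $u$ together with the measurability in $(t,\omega)$ built into Theorem~\ref{Thdiffeo}.

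The bulk of the difficulty has in fact already been absorbed into Theorem~\ref{Thdiffeo}; what remains is only mildly technical. The one point that genuinely needs care is making the exceptional null set uniform in $t$ — handled above by the countable union over $T\in\N$ — and, if one wants $(p_t)_{t\ge0}$ to be a bona fide measurable density process rather than merely an almost-sure statement for each fixed $t$, verifying the joint measurability of $x^{-1}$ in all its variables, which is standard for flows in the sense of \cite{kunita1997}.
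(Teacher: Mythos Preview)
Your argument is correct and, like the paper's, rests entirely on the diffeomorphism property supplied by Theorem~\ref{Thdiffeo}. The paper, however, routes the conclusion through the change-of-variables identity \eqref{Ideediffeo} and then asserts that absolute continuity follows once one checks that $\det(Dx^{-1}(u,t))\neq 0$ Lebesgue-a.e., a fact it establishes by developing the Liouville-type formula of Theorem~\ref{Liouville}. Your null-set argument is more direct: a $C^1$-diffeomorphism is locally Lipschitz and therefore sends Lebesgue-null sets to Lebesgue-null sets, so $\mu_t\ll\leb$ follows immediately without any Jacobian computation. (In fact the nonvanishing of the Jacobian is automatic for a $C^1$-diffeomorphism via the chain rule, so the paper's detour through Liouville at this point is really setting up the explicit density representation used in later sections rather than filling a genuine gap for this theorem.) Your closing identification of $p_t$ via \eqref{Ideediffeo} then matches the paper's, and the attention you pay to making the exceptional set uniform in $t$ and to joint measurability is a useful bit of hygiene that the paper leaves implicit.
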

 The theorem follows immediately from \eqref{Ideediffeo} combined with Theorem \ref{Thdiffeo}, if we can show that $\det(Dx^{-1}(u,t))$ is not zero Lebesgue almost everywhere. Hence we will show the latter by proving an analogue to the deterministic Liouville formula.
\begin{assumption}\raisebox{\ht\strutbox}{\hypertarget{(A3)}{}} 
We will from now on assume, if not mentioned otherwise, that the initial measure of the problem \eqref{DefSDEint} is absolutely continuous with respect to the Lebesgue measure. 
\end{assumption}

\begin{notation}
 We will from now on denote the Lebesgue density corresponding to $\mu_0$ and $\mu_t$ as $p_0$ and $p_t$ respectively for all $t\ge 0$.
\end{notation}
From \eqref{Ideediffeo} we actually get a concrete form for the Lebesgue densities $(p_t)_{t\ge 0}$ namely $p_t= p_0(x^{-1}(\cdot,t)) \abs{\dett(Dx^{-1}(u,t))}$. We can actually give an explicit form, of the occurring determinant. The determinant is a polynomial, and $\left(Dx(u,t)\right)_{u\in \Rd,t\ge0}$ is the solution to a linear equation due to Corollary \ref{DifferentialSDE}. We can thus apply Itô's formula in order to compute the concrete form. But before we do that, good representations of partial derivatives of the determinant are required. 
\begin{lemma}\label{fruitful}
Let $A\in \R^{d\times d}$ be invertible and $B\in \R^{d\times d}$ then we have
 \begin{enumerate}[label= $\roman*$)]
    \item 
Consider the determinant function as a function of the components of the matrix, i.e. the entries of the matrix.  Let $A= (A_{i,j})_{i,j=1,\dots,d }$ and $B=(B_{i,j})_{i,j=1,\dots,d}$.

\begin{align*}
    \frac{\partial \det(A)} {\partial A_{i,j}} = (-1)^{i+j} M_{i,j}(A)
\end{align*}
where $M_{i,j}(A)$ is the determinant of the submatrix of $A$ where the $i$-th row and the $j$-th column has been erased. 
\item 
\begin{align*}
    \sum^d_{i,j=1} \frac{\partial \det(A)}{\partial A_{i,j}} (BA)_{i,j} = \det(A)\TR(B)
\end{align*}
\item It holds that 
\begin{align*}
\sum_{i,j,k,l=1}^d \frac{\partial^2 \dett(A)}{\partial A_{i,j} \partial A_{k,l} } (BA)_{i,j}(BA)_{k,l} = (\tr(B)^2-\tr(B^2)) \dett(A)    
\end{align*}
\end{enumerate}

\end{lemma}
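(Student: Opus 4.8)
The three identities are purely linear-algebraic, so I would treat them in order, using the cofactor/Laplace expansion as the single engine. For part $i)$, the plan is to expand $\det(A)$ along the $i$-th row: $\det(A) = \sum_{j=1}^d (-1)^{i+j} A_{i,j} M_{i,j}(A)$, where $M_{i,j}(A)$ does not involve any entry $A_{i,k}$ of the $i$-th row. Differentiating this expression with respect to $A_{i,j}$ therefore kills every term except the one containing $A_{i,j}$ explicitly, leaving $(-1)^{i+j} M_{i,j}(A)$. (Strictly, one should note $M_{i,j}(A)$ may still involve entries $A_{k,j}$ from column $j$ with $k\neq i$, but never entries from row $i$, which is all that matters.) Equivalently one observes $\partial \det(A)/\partial A_{i,j}$ is the $(i,j)$ entry of the adjugate, i.e. $\det(A)(A^{-1})_{j,i}$, which is the form I will actually want downstream.

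For part $ii)$, I would start from the reformulation just noted, $\frac{\partial \det(A)}{\partial A_{i,j}} = \det(A)\,(A^{-1})_{j,i}$, valid since $A$ is invertible. Then
\begin{align*}
\sum_{i,j=1}^d \frac{\partial \det(A)}{\partial A_{i,j}} (BA)_{i,j}
= \det(A)\sum_{i,j=1}^d (A^{-1})_{j,i}(BA)_{i,j}
= \det(A)\sum_{j=1}^d (A^{-1}BA)_{j,j}
= \det(A)\,\TR(A^{-1}BA),
\end{align*}
and $\TR(A^{-1}BA)=\TR(B)$ by cyclicity of the trace. Part $iii)$ is the genuine work. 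The plan is to differentiate the identity in $ii)$ once more. Write $g(A):=\det(A)$ and think of $B$ as fixed; define $h(A):=\sum_{i,j}\frac{\partial g}{\partial A_{i,j}}(A)\,(BA)_{i,j} = g(A)\TR(B)$. Differentiating $h$ with respect to $A_{k,l}$ and using the product rule on the left gives
\begin{align*}
\sum_{i,j} \frac{\partial^2 g}{\partial A_{i,j}\partial A_{k,l}}(BA)_{i,j} + \sum_{i,j}\frac{\partial g}{\partial A_{i,j}}\frac{\partial (BA)_{i,j}}{\partial A_{k,l}}
= \frac{\partial g}{\partial A_{k,l}}\TR(B).
\end{align*}
Here $\frac{\partial (BA)_{i,j}}{\partial A_{k,l}} = B_{i,k}\delta_{j,l}$, so the second sum on the left is $\sum_i \frac{\partial g}{\partial A_{i,l}} B_{i,k}$. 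Now multiply the whole identity by $(BA)_{k,l}$ and sum over $k,l$. The right-hand side becomes $\TR(B)\sum_{k,l}\frac{\partial g}{\partial A_{k,l}}(BA)_{k,l} = \TR(B)\cdot g(A)\TR(B) = g(A)\TR(B)^2$ by part $ii)$. The first term on the left is exactly the double sum in the claim. It remains to evaluate the cross term $\sum_{k,l}\Big(\sum_i \frac{\partial g}{\partial A_{i,l}}B_{i,k}\Big)(BA)_{k,l}$; using $\frac{\partial g}{\partial A_{i,l}} = g(A)(A^{-1})_{l,i}$ this equals $g(A)\sum_{i,k,l}(A^{-1})_{l,i}B_{i,k}(BA)_{k,l} = g(A)\sum_l (A^{-1}B\,BA)_{l,l} = g(A)\TR(A^{-1}B^2A) = g(A)\TR(B^2)$. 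Rearranging yields
\begin{align*}
\sum_{i,j,k,l} \frac{\partial^2 g}{\partial A_{i,j}\partial A_{k,l}}(BA)_{i,j}(BA)_{k,l} = g(A)\TR(B)^2 - g(A)\TR(B^2) = (\TR(B)^2 - \TR(B^2))\det(A),
\end{align*}
as required.

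The only real obstacle is bookkeeping in part $iii)$: keeping straight which index the differentiation $\partial_{A_{k,l}}$ acts on, getting the Kronecker deltas from $\partial(BA)_{i,j}/\partial A_{k,l}$ right, and correctly identifying the cross term as $\TR(B^2)$ rather than $\TR(B)^2$. An alternative, perhaps cleaner, route would be to work basis-free: use that $t\mapsto \det(A+tE_{i,j})$ depends affinely on $t$ to get $i)$, write $\det(A+sC) = \det(A)\det(I + sA^{-1}C)$ and expand in $s$ using $\det(I+sK) = 1 + s\TR(K) + \tfrac{s^2}{2}(\TR(K)^2 - \TR(K^2)) + O(s^3)$, then read off the first and second directional derivatives with $C = BA$ (so $K = A^{-1}BA$, $\TR(K)=\TR(B)$, $\TR(K^2)=\TR(B^2)$). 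This makes $iii)$ essentially immediate and I would likely present it this way, relegating the component-wise computation to a remark. Either way, no analytic input is needed — everything is polynomial identities in the entries of $A$ and $B$, with invertibility of $A$ used only to pass to the $A^{-1}$ representation (and the final identities extend to all $A$ by continuity/density).
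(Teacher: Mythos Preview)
Your proof is correct and follows the same overall architecture as the paper --- Laplace expansion for $i)$, then differentiate the identity of $ii)$ to obtain $iii)$ --- but the execution differs in a useful way. For $ii)$ the paper stays entirely at the cofactor level: it writes $\sum_{i,j}(-1)^{i+j}M_{i,j}(A)\sum_k B_{i,k}A_{k,j}$ and kills the off-diagonal terms via the ``false Laplace expansion'' $\sum_j (-1)^{i+j}M_{i,j}(A)A_{k,j}=0$ for $k\neq i$ (determinant of a matrix with a repeated row). You instead pass through the adjugate identity $\partial\det(A)/\partial A_{i,j}=\det(A)(A^{-1})_{j,i}$ and finish with cyclicity of the trace, which is quicker but leans on invertibility (harmless here, and as you note extends by density). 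For $iii)$ both proofs differentiate $ii)$, but the paper first multiplies by $(BA)_{k,l}$ and \emph{then} applies $\partial/\partial A_{k,l}$, which produces an extra pair of terms --- one on each side, coming from $\partial(BA)_{k,l}/\partial A_{k,l}$ --- that carry a stray factor $d$ and only cancel at the end; your order (differentiate first, then contract with $(BA)_{k,l}$) avoids these entirely and keeps the bookkeeping to a single cross term, which your $A^{-1}$ representation identifies immediately as $\det(A)\TR(B^2)$. The basis-free alternative you sketch via $\det(I+sA^{-1}BA)=1+s\,\TR(B)+\tfrac{s^2}{2}(\TR(B)^2-\TR(B^2))+O(s^3)$ does not appear in the paper and is indeed the cleanest route to all three statements simultaneously.
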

\begin{proof}
"$i)$" \newline
 We denote the determinant of the submatrix of a matrix $A\in \R^{d\times d}$, which results from removing the $i$-th row and the $k$-th column, with $M_{i,k}(A)$ for $i,k=1,\dots, d$.
 Then the Laplace expansion of the determinant yields, for $A\in \R^{d\times d}$ and  $\pi_{i,k}(A) = A_{i,k}$ for all $A\in \R^{d\times d}$, that
\begin{align*}
\frac{\partial \det(A)}{\partial A_{i,j}} &= \frac{\partial}{\partial A_{i,j}} \sum_{k=1}^d (-1)^{i+k} M_{i,k}(A) \pi_{i,k}(A)
    \\&= \sum_{k=1}^d (-1)^{i+k}\frac{\partial \pi_{i,k}} {\partial A_{i,j}} (A) M_{i,k}(A)+ A_{i,k}  \frac{\partial M_{i,k}}{\partial A_{i,j}} (A) = (-1)^{i+j} M_{i,j}(A)
\end{align*}
holds. Where the last equality follows from
\begin{align*}
    \frac{\partial M_{i,k}}{\partial A_{i,j}}(A) =0
\end{align*}
since $M_{i,k}$ does not depend on $A_{i,j}$ because the $i$-th row was removed. Furthermore it is obvious that 
\begin{align*}
    \frac{\partial \pi_{i,k}}{\partial A_{i,j}} (A) = \delta_{k,j}
\end{align*}
holds for all $i,j,k=1,\dots, d$ with Kronecker's delta $\delta_{k,j}$.
\newline
"$ii)$"
\newline
by $i)$ we know 
\begin{align}\label{glgdet}\begin{split}
     \sum_{i,j=1}^d \frac{\partial \det (A)}{\partial A_{i,j}} (BA)_{i,j} &= \sum_{i,j=1}^d (-1)^{i+j} M_{i,j}  \sum_{k=1}^d B_{i,k}A_{k,j}
        \\\overset{*}&{=} \sum_{i,j=1}^d (-1)^{i+j} M_{i,j} B_{i,i} A_{i,j}= \TR(B) \det(A)
\end{split}
\end{align}  
where $*$ follows easily by considering
\begin{align}\label{deter}
    \sum_{j=1}^d (-1)^{i+j} M_{i,j}(A) A_{k,j} = 0
\end{align}
for all $k\neq i$, since \eqref{deter} is the determinant of the matrix $A$ where the $i$-th row has been replaced by the vector $(A_{k,1},\dots,A_{k,d})$. This determinant is obviously $0$ since the matrix has at most rank $d-1$.\newline
"$iii)$" From $ii)$ we can derive the formula, by looking at 
\begin{align}\label{glg2teabl}
    \sum_{i,j,k,l=1}^d \frac{\partial}{\partial A_{k,l}} \left( \frac{\partial \dett(A)}{\partial A_{i,j}} (BA)_{i,j}(BA)_{k,l}\right) = \tr(B)\sum_{k,l=1}^d \frac{\partial }{\partial A_{k,l}} \left( \det(A) (BA_{k,l})  \right)  
\end{align}
Let us firs compute the left-hand side. Here we get 
\begin{align*}
     & \sum_{i,j,k,l=1}^d \frac{\partial}{\partial A_{k,l}} \left( \frac{\partial \dett(A)}{\partial A_{i,j}} (BA)_{i,j}(BA)_{k,l}\right) = \sum_{i,j,k,l,m,n=1}^d \frac{\partial}{\partial A_{k,l}} \left( \frac{\partial \dett(A)}{\partial A_{i,j}} B_{i,n}A_{n,j}B_{k,m}A_{m,l}\right)\\
     &+\sum_{i,j,k,l=1}^d  \frac{\partial^2 \dett(A)}{\partial A_{i,j} \partial A_{k,l} } (BA)_{i,j}(BA)_{k,l} + \underbrace{\sum_{i,j,k,l,n,m=1}^d \frac{\partial \dett(A)}{\partial A_{i,j}} B_{i,n}\frac{\partial \pi_{n,j}(A)}{\partial A_{k,l}}B_{k,m}A_{m,l}}_{:=C}\\
     &+ \underbrace{\sum_{i,j,k,l,n,m=1}^d \frac{\partial \dett(A)}{\partial A_{i,j}} B_{i,n} A_{n,j}B_{k,m}\frac{\partial \pi_{m,l}(A)}{\partial A_{k,l}}}_{:=D}
 \end{align*}
 for the second term we get 
 \begin{align*}
     C&= \sum_{i,k,l,m=1}^d \frac{\partial \dett(A)}{\partial A_{i,l}} B_{i,k}B_{k,m}A_{m,l}\overset{(*)}{=} \sum^d_{i,k=1}B_{i,k}B_{k,i} \sum^d_{l=1} \frac{\partial \dett(A)}{\partial A_{i,l}} A_{i,l} \overset{(**)}{=} \det(A)\sum_{i,k=1}^d B_{i,k}B_{k_i}\\
     &= \dett(A) \tr(B^2)
 \end{align*}
 where $(*)$ follows in exactly the same way as in \eqref{glgdet} and $(**)$ follows from $i)$ combined with the determinant decomposition formula. For the second term we get 
 \begin{align*}
     D &= \sum_{i,j,k,l,n=1}^d \frac{\partial \dett(A)}{\partial A_{i,j}} B_{i,n} A_{n,j}B_{k,k}\overset{(*)}{=} \sum_{i,j,k,l= 1}^d \frac{\partial \dett(A)}{\partial A_{i,j}} B_{i,i} A_{i,j}B_{k,k}\\
     & \sum_{l=1}^d\sum_{i,k=1}^d B_{i,i}B_{k,k} \sum^d_{j=1}A_{i,j}\frac{\partial \dett(A)}{\partial A_{i,j}} = d\tr(B)^2 \dett(A)
 \end{align*}
 Now that we have a representation for the left-hand side we want to rewrite the right-hand side of \eqref{glg2teabl}. Consider 
 \begin{align*}
      &\tr(B)\sum_{k,l=1}^d \frac{\partial }{\partial A_{k,l}} \left( \det(A) (BA_{k,l})  \right)\\
      =& \tr(B)\left(\sum_{k,l=1}^d \frac{\partial \det(A)}{\partial A_{k,l}} (BA_{k,l})  + \det(A) \sum_{n=1}^d B_{k,n}\frac{\partial\pi_{n,l}(A)}{\partial A_{k,l}}\right)
 \end{align*}
 the first sum is equal to $\tr(B)\det(A) $, by $ii)$ the second one however, equals to 
 \begin{align*}
   \sum_{k,l=1}^d  \det(A) B_{k,k} = d \tr(B) \det(A)
 \end{align*}
 In total we get from all the previous computation and \eqref{glg2teabl}, the following formula
 
 \begin{align*}
     &\sum_{i,j,k,l=1}^d   \frac{\partial^2 \dett(A)}{\partial A_{i,j}\partial A_{k,l}} (BA)_{i,j}(BA)_{k,l} \\&= \tr(B)^2 \det(A) + d \tr(B)^2 \det(A) -d\tr(B)^2 \det(A) -\tr(B^2)\det(A) \\
     &= (\tr(B)^2-\tr(B^2)) \det(A)
 \end{align*}
\end{proof}
With this Lemma at hand, we can prove the SDE analogue of Liouville's theorem. In the following we will denote by $\dive(g(\cdot))(v)$ the divergence of $g(\cdot)$ evaluated at $v\in \Rd$, the same holds for the differential $D$.
\begin{theorem}\label{Liouville}
Under the Assumptions \hyperlink{(A1)}{(A1)} and \hyperlink{(A2)}{(A2)} we have 

\begin{align*}
    \det(Dx(u,t))&= \exp\Big(\int_0^t \dive(a(\cdot,\mu_s,s))(x(u,s))-\frac{1}{2} \sum_{k=0}^\infty \sum_{p=1}^d\tr\left( \left(Db_k^{\cdot,p}(x(u,s),\mu_s,s)\right)^2 \right) \dd s \\
    &+ \sum_{k=0}^\infty \sum_{p=1}^d\int_0^t \dive(b_k^{\cdot,p}(\cdot,\mu_t,t))(x(u,t)) \dd B^p_k(t) \Big) 
\end{align*}
\end{theorem}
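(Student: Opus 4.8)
The plan is to apply Itô's formula to the scalar process $t\mapsto\dett(Dx(u,t))$, using the explicit first- and second-order derivatives of the determinant provided by Lemma \ref{fruitful}, and then to read off the answer by solving the resulting linear SDE with the Doléans--Dade exponential.

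Fix $u\in\Rd$ and abbreviate $Y(t):=Dx(u,t)$. Writing the noise column-wise (so that $B_k^p$ is the $p$-th coordinate of $B_k$ and $Db_k^{\cdot,p}$ is the Jacobian of the vector field $b_k^{\cdot,p}$), Corollary \ref{DifferentialSDE} says that $Y$ is the continuous matrix-valued semimartingale solving
\begin{align*}
dY(t)&= Da(x(u,t),\mu_t,t)\,Y(t)\,\dd t + \sum_{k=0}^\infty\sum_{p=1}^d Db_k^{\cdot,p}(x(u,t),\mu_t,t)\,Y(t)\,\dd B_k^p(t), \\
Y(0)&= I_{d\times d}.
\end{align*}
Assumption \hyperlink{(A2)}{(A2)} $iii)$ gives a uniform bound on $\sum_k\norm{Db_k}_{HS}^2$, so the martingale part of $Y$ is a genuine $L^2$-martingale; to keep the computations clean I would first truncate the sum over $k$ at a finite $N$, carry out Itô's formula there, and then let $N\to\infty$ using these uniform bounds together with the moment estimates from Section 2.

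Now apply Itô's formula to $\dett(Y(t))$, viewing $\dett$ as a smooth function of the $d^2$ matrix entries. The first-order term splits as usual. For the drift, Lemma \ref{fruitful} $ii)$ with $B=Da(x(u,t),\mu_t,t)$ and $A=Y(t)$ gives $\sum_{i,j}\partial_{A_{i,j}}\dett(Y)\,(Da\,Y)_{i,j}=\dett(Y)\,\tr(Da)=\dett(Y)\,\dive(a(\cdot,\mu_t,t))(x(u,t))$. For the martingale part, the same lemma applied to each pair $(k,p)$ with $B=Db_k^{\cdot,p}$ produces $\dett(Y)\sum_{k,p}\tr(Db_k^{\cdot,p})\,\dd B_k^p=\dett(Y)\sum_{k,p}\dive(b_k^{\cdot,p}(\cdot,\mu_t,t))(x(u,t))\,\dd B_k^p$. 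For the second-order term, independence of the $B_k^p$ yields $d\langle Y_{i,j},Y_{m,n}\rangle=\sum_{k,p}(Db_k^{\cdot,p}Y)_{i,j}(Db_k^{\cdot,p}Y)_{m,n}\,\dd t$, and Lemma \ref{fruitful} $iii)$ collapses each $(k,p)$-block to $\tfrac12\bigl(\tr(Db_k^{\cdot,p})^2-\tr((Db_k^{\cdot,p})^2)\bigr)\dett(Y)\,\dd t$. Collecting the three contributions, $Z(t):=\dett(Y(t))$ solves a scalar linear SDE $dZ=Z\bigl(\alpha\,\dd t+\sum_{k,p}\beta_{k,p}\,\dd B_k^p\bigr)$ with $\beta_{k,p}=\dive(b_k^{\cdot,p})(x(u,t))$ and $\alpha=\dive(a)(x(u,t))+\tfrac12\sum_{k,p}\bigl(\beta_{k,p}^2-\tr((Db_k^{\cdot,p})^2)\bigr)$.

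It remains to solve this equation. Since $Z(0)=\dett(I_{d\times d})=1$, the Doléans--Dade formula gives $Z(t)=\exp\bigl(\int_0^t(\alpha-\tfrac12\sum_{k,p}\beta_{k,p}^2)\,\dd s+\sum_{k,p}\int_0^t\beta_{k,p}\,\dd B_k^p\bigr)$, and the stochastic-exponential correction $-\tfrac12\sum_{k,p}\beta_{k,p}^2$ cancels precisely the $+\tfrac12\sum_{k,p}\beta_{k,p}^2$ sitting inside $\alpha$, leaving the drift $\dive(a(\cdot,\mu_s,s))(x(u,s))-\tfrac12\sum_{k,p}\tr((Db_k^{\cdot,p}(x(u,s),\mu_s,s))^2)$, which is exactly the asserted formula (and shows as a byproduct that $\dett(Dx(u,t))>0$ almost surely, which is what Section 3 needs). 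The only real obstacle is the technical bookkeeping around the infinite sum over $k$: justifying the interchange of the series with Itô's formula and with the quadratic-variation computation, and the convergence of the ensuing series of stochastic integrals, uniformly in $t$ on compacts — all of which follows from the uniform bounds in Assumption \hyperlink{(A2)}{(A2)} via the $N\to\infty$ truncation argument. The algebraic core of the theorem is entirely contained in Lemma \ref{fruitful}.
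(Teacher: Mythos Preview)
Your proposal is correct and follows essentially the same route as the paper: apply It\^o's formula to $\dett(Dx(u,t))$, use Lemma~\ref{fruitful} parts $ii)$ and $iii)$ to collapse the first- and second-order terms into traces, recognise the resulting scalar linear SDE, and read off the stochastic exponential. Your write-up is in fact slightly more explicit than the paper's about the Dol\'eans--Dade cancellation of the $\tfrac12\sum_{k,p}\beta_{k,p}^2$ terms and about the truncation argument for the infinite sum over $k$, but the underlying argument is identical.
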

\begin{beweis}
The theorem follows from the representation of the derivative. By applying Itô's formula, we  get from Lemma \ref{fruitful} \begin{align*}
    \det(Dx(u,t))-1&= \sum^d_{i,j=1}\int_0^t \frac{\partial \det(Dx(u,s))}{\partial A_{i,j}}\dd D^{i,j}x(u,s) \\
    &+\frac{1}{2} \sum_{i,j,k,l=1}^d \int_0^t \frac{\partial^2 \det(Dx(u,s))}{\partial A_{i,j} \partial A_{k,l}} \dd \skalar{D^{i,j}x(u,\cdot),D^{k,l}x(u,\cdot)}_s\\
    &=  \sum^d_{i,j=1}\int_0^t \frac{\partial \det(Dx(u,s))}{\partial A_{i,j}} (Da(\cdot,\mu_s,s)(x(u,s))Dx(u,s))_{i,j} \dd s\\
    &+ \sum^\infty_{k=0}\sum^d_{i,j,p=1}\int_0^t \frac{\partial \det(Dx(u,s))}{\partial A_{i,j}}(Db_k^{\cdot,p}(\cdot,\mu_s,s)(x(u,s))Dx(u,s))_{i,j}\dd B^p_k(s)\\
   & +\frac{1}{2} \sum_{k=0}^\infty \sum^d_{i,j,k,l,p=1} \int_0^t \frac{\partial^2 \det(Dx(u,s))}{\partial A_{i,j}\partial A_{k,l}}\\
   \times& (Db^{\cdot,p}_k(\cdot,\mu_s,s)(x(u,s))Dx(u,s))_{i,j}(Db^{\cdot,p}_k(\cdot,\mu_s,s)(x(u,s))Dx(u,s))_{k,l} \dd s\\
   &= \int_0^t \dive(a(\cdot,\mu_s,s))(x(u,s))\det(Dx(u,s)) \dd s\\
   &+\sum_{k=0}^\infty \sum_{p=1}^d\int_0^t \dive(b^{\cdot,p}_k(\cdot,\mu_s,s)) \det(Dx(u,s)) \dd B^p_k(s)\\
   &+\frac{1}{2} \sum_{k=0}^\infty\int_0^t (\tr(Db^{\cdot,p}_k(\cdot,\mu_s,s)(x(u,s)))^2-\tr\left(\left(Db^{\cdot,p}_k(\cdot,\mu_s,s)(x(u,s))\right)^2\right) \det(Dx(u,s))\dd s
\end{align*}  
almost surely.
Hence $\left(\det(Dx(u,t))\right)_{t\ge 0}$ is the solution to a $1$-dim. linear equation and we can write down the solution explicitly
\begin{align*}
    \det(Dx(u,t))&= \exp\Bigg(\int_0^t \dive(a(\cdot,\mu_s,s))(x(u,s))-\frac{1}{2} \sum_{k=0}^\infty \sum_{p=1}^d\tr\left( \left(Db_k^{\cdot,p}(x(u,s),\mu_s,s)\right)^2 \right) \dd t \\
    &+ \sum_{k=0}^\infty \sum_{p=1}^d\int_0^t \dive(b_k^{\cdot,p}(\cdot,\mu_t,t))(x(u,t)) \dd B^p_k(t) \Bigg)  \fastsicher
\end{align*}

\end{beweis}

This representation of  $(\det(Dx(u,t)))_{u\in \Rd,t\ge0}$ enables us to control the $\lp{p}(\Rd)$-moments of $(p_t)_{t\ge 0}$ with respect to the initial data. 
\begin{lemma}
Under the assumptions \hyperlink{(A1)}{(A1)} and \hyperlink{(A2)}{(A2)} we have

\begin{align*}
    \sup_{u\in \Rd} \E\left(\sup_{0\le t\le T} \abs{\frac{1}{\det(Dx(u,t))}}^p\right)<\infty
\end{align*}
for all $u\in \Rd$, $T>0$ and $p\ge 1$.
\end{lemma}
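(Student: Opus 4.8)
The starting point is the explicit formula of Theorem~\ref{Liouville}. Since the right‑hand side there is the exponential of a real‑valued semimartingale, $\det(Dx(u,t))$ is almost surely strictly positive for every $t$, and, both sides being continuous in $t$, the identity holds simultaneously for all $t\ge 0$ off a single $\W$‑null set. Write
\begin{align*}
    N^u_t:=\sum_{k=0}^\infty\sum_{q=1}^d\int_0^t \dive\big(b_k^{\cdot,q}(\cdot,\mu_s,s)\big)(x(u,s))\,\dd B^q_k(s)
\end{align*}
for the continuous local‑martingale part of that semimartingale (a well‑defined $L^2$‑martingale once the quadratic‑variation bound below is in place), and
\begin{align*}
    R^u_t:=-\int_0^t \dive\big(a(\cdot,\mu_s,s)\big)(x(u,s))\,\dd s+\frac12\sum_{k=0}^\infty\sum_{q=1}^d\int_0^t \tr\!\Big(\big(Db_k^{\cdot,q}(x(u,s),\mu_s,s)\big)^2\Big)\,\dd s
\end{align*}
for (minus) its finite‑variation part. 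Then Theorem~\ref{Liouville} reads
\begin{align*}
    \abs{\frac{1}{\det(Dx(u,t))}}^p=\exp\big(p\,R^u_t\big)\exp\big(-p\,N^u_t\big)\fastsicher
\end{align*}
for all $t\ge 0$ simultaneously.

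The crucial point is that all integrands occurring in $R^u$ and in $\langle N^u\rangle$ are bounded by constants that do not depend on $u$. Setting $\mu=\nu$ in the Lipschitz estimate of Theorem~\ref{ThExiSDE} (Assumption~\hyperlink{(A1)}{(A1)}) shows that $a(\cdot,\mu,t)$ is $C$‑Lipschitz, hence $\norm{Da(v,\mu,t)}\le C$ and $\abs{\dive(a(\cdot,\mu,t))(v)}=\abs{\tr Da(v,\mu,t)}\le dC$ for all $v\in\Rd,\mu\in\fancym,t\ge 0$. By Assumption~\hyperlink{(A2)}{(A2)}$\,iii)$ one has $\sum_{k=0}^\infty\sum_{q=1}^d\norm{Db_k^{\cdot,q}(v,\mu,t)}_{HS}^2=\sum_{k=0}^\infty\norm{Db_k(v,\mu,t)}_{HS}^2\le C^2$; since $\abs{\tr(M^2)}\le\norm{M}_{HS}^2$ and $\abs{\dive(b_k^{\cdot,q})(v)}^2=\abs{\tr(Db_k^{\cdot,q}(v))}^2\le d\,\norm{Db_k^{\cdot,q}(v)}_{HS}^2$, this yields $\sum_{k,q}\abs{\tr((Db_k^{\cdot,q})^2)}\le C^2$ and
\begin{align*}
    \langle N^u\rangle_t=\sum_{k=0}^\infty\sum_{q=1}^d\int_0^t\abs{\dive\big(b_k^{\cdot,q}(\cdot,\mu_s,s)\big)(x(u,s))}^2\,\dd s\le dC^2\,t\le dC^2\,T
\end{align*}
for all $t\in[0,T]$, uniformly in $u$ and $\omega$. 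In particular $\abs{R^u_t}\le(dC+\tfrac12C^2)T=:KT$ for all $t\le T$, so that
\begin{align*}
    \sup_{0\le t\le T}\abs{\frac{1}{\det(Dx(u,t))}}^p\le e^{pKT}\sup_{0\le t\le T}e^{-pN^u_t}.
\end{align*}

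It therefore suffices to bound $\E\big(\sup_{0\le t\le T}e^{-pN^u_t}\big)$ uniformly in $u$. Let $Z^u_t:=\mathcal{E}(-2pN^u)_t=\exp\big(-2pN^u_t-2p^2\langle N^u\rangle_t\big)$ be the Doléans–Dade exponential. Since $\langle N^u\rangle_T\le dC^2T$ is a deterministic bound, Novikov's criterion shows that $Z^u$ is a nonnegative true martingale on $[0,T]$, and the same computation gives $\E[(Z^u_T)^2]=\E\big[\mathcal{E}(-4pN^u)_T\exp(4p^2\langle N^u\rangle_T)\big]\le e^{4p^2dC^2T}$. Doob's $L^2$‑maximal inequality then yields $\E[\sup_{t\le T}Z^u_t]\le\big(\E[(\sup_{t\le T}Z^u_t)^2]\big)^{1/2}\le 2\big(\E[(Z^u_T)^2]\big)^{1/2}\le 2e^{2p^2dC^2T}$. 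Using $e^{-2pN^u_t}=Z^u_t\exp(2p^2\langle N^u\rangle_t)\le e^{2p^2dC^2T}Z^u_t$, the identity $\sup_{t\le T}e^{-pN^u_t}=(\sup_{t\le T}e^{-2pN^u_t})^{1/2}$, and Jensen's inequality, we obtain
\begin{align*}
    \E\Big(\sup_{0\le t\le T}e^{-pN^u_t}\Big)\le\Big(e^{2p^2dC^2T}\,\E\big[\sup_{t\le T}Z^u_t\big]\Big)^{1/2}\le\sqrt2\,e^{2p^2dC^2T},
\end{align*}
which does not depend on $u$. Combining with the previous display gives $\sup_{u\in\Rd}\E\big(\sup_{0\le t\le T}\abs{\det(Dx(u,t))^{-1}}^p\big)\le\sqrt2\,e^{pKT+2p^2dC^2T}<\infty$.

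The only genuinely delicate point is pulling the supremum over $t$ inside the expectation: a bound on $\E[\det(Dx(u,t))^{-p}]$ for each fixed $t$ is not enough, and one must instead exploit that $\langle N^u\rangle$ is bounded by a deterministic constant which is, moreover, independent of $u$ — precisely what Assumption~\hyperlink{(A2)}{(A2)}$\,iii)$ together with the Lipschitz bound on $a$ from Assumption~\hyperlink{(A1)}{(A1)} supply. Once that is available, the maximal inequality for the stochastic exponential does the rest, and the uniformity in $u$ of the final constant is automatic, since every quantitative input was $u$‑free.
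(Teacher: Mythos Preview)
Your proof is correct. Both your argument and the paper's rest on the same input --- the uniform boundedness of $\dive(a)$, $\sum_{k,q}\tr((Db_k^{\,\cdot,q})^2)$ and $\sum_{k,q}(\dive b_k^{\,\cdot,q})^2$ supplied by \hyperlink{(A1)}{(A1)} and \hyperlink{(A2)}{(A2)}$\,iii)$ --- but they exploit it differently. The paper observes that $Z(u,t):=\det(Dx(u,t))^{-1}$ solves a \emph{linear} scalar SDE with bounded (random) coefficients, then applies the Burkholder--Davis--Gundy inequality and Gr\"onwall's lemma to $\E\big(\sup_{s\le t}|Z(u,s)|^p\big)$ directly; this is the standard moment estimate for linear equations and dispatches the statement in a couple of lines. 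You instead work with the closed-form solution of that linear SDE given by Theorem~\ref{Liouville}, separate the bounded-variation part (which is trivially bounded by a deterministic constant) from the exponential of the martingale part, and then control the latter via Novikov's criterion and Doob's $L^2$-maximal inequality applied to the Dol\'eans--Dade exponential. Your route is a bit longer but yields an explicit constant $\sqrt{2}\,e^{pKT+2p^2dC^2T}$, whereas the BDG--Gr\"onwall argument only produces an implicit $e^{C(p,d)T}$; conversely, the paper's method is more robust in that it does not require the quadratic variation to be bounded by a \emph{deterministic} constant, only that the coefficients of the linear SDE be bounded.
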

\begin{beweis}
The statement follows essentially from Theorem \ref{Liouville}. Since we have that 
$\frac{1}{\det(Dx(u,s))}$ is the solution to the following linear SDE
\begin{align*}
    dZ(u,t) &= -\left(\dive(a(x(u,s),\mu_s,s)) +\frac{1}{2}\sum^\infty_{k=0} \sum_{p=1}^d \left(\tr(Db^{\cdot,p}_k(x(u,t),\mu_t,t))\right)^2-\tr(\left((Db^{\cdot,p}_k(x(u,t),\mu_t,t))^2\right)\right)\\
    \times& Z(u,t) \dd t - \sum_{k=0}^\infty \dive(b_k(\cdot,\mu_t,t))(x(u,t))Z(u,t) \dd B_k\\
    Z(u,0)= 1
\end{align*}
 then we get by the BDG inequality, 
 \begin{align*}
     \E(\sup_{0\le t\le T}\abs{Z(u,t)}^p) \le 3^{p-1}\Big(1+ \int_0^T\E(\abs{g(x(u,t),\mu_t,t)}^p\dd t \abs{Z(u,t)}^p)\Big)\le C(1+\int_0^T \E(\abs{Z(u,t)}^p)) \dd t
 \end{align*}
 where 
 \begin{align*}
     g(u,\mu,t)&= \dive(a(\cdot,\mu,t))(u) +\frac{3}{2}\sum^\infty_{k=0} \sum_{p=1}^d \left(\tr(Db^{\cdot,p}_k(\cdot,\mu,t)(u))\right)^2\\&-\sum^\infty_{k=0} \sum_{p=1}^d\tr(\left((Db^{\cdot,p}_k(\cdot,\mu,t)(u))^2\right) 
 \end{align*}
 from the assumption \hyperlink{(A2)}{(A2)} we know, that this function is bounded. Hence we get 
 \begin{align*}
     \E(\sup_{0\le t\le T}\abs{Z(u,t)}^p) \le C
 \end{align*}
 for all $u\in \Rd$ and $T>0$, which yields the claim.
\end{beweis}
With this result at hand we obtain  $p_t\in\lp{p}(\Rd)$  almost surely for all $t\ge 0$ whenever the initial data fulfills $p_0 \in \lp{p}(\Rd)$.
\begin{theorem}
    Assume \hyperlink{(A1)}{(A1)}, \hyperlink{(A2)}{(A2)} and \hyperlink{(A3)}{(A3)}, then for some constant $C>0$
    
    \begin{align*}
    \left( \E\left(\sup_{0\le t \le T} \int_{\Rd} p_t(u)^p \dd u\right)\right)^{\frac{1}{p}} \le C \norm{p_0}_{\lp{p}} 
    \end{align*}
   holds for all $T>0$ and $p\ge 1$.
 \end{theorem}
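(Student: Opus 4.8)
The plan is to express the spatial $\lp{p}$-norm of $p_t$ as an integral of $p_0^p$ against a negative power of the Jacobian of the flow, and then invoke the preceding lemma. I would start from the explicit density formula $p_t(u) = p_0(x^{-1}(u,t))\,\abs{\det(Dx^{-1}(u,t))}$ and from Theorem \ref{Thdiffeo}, which provides a single $\W$-null set $N$ outside of which $x(\cdot,t)$ is a $\ccc^1$-diffeomorphism of $\Rd$ for \emph{every} $t\ge 0$ simultaneously. Fix $\omega\notin N$ and $t\in[0,T]$; the substitution $u=x(v,t)$ has Jacobian $\abs{\det(Dx(v,t))}$, and the inverse function theorem gives $Dx^{-1}(x(v,t),t)=(Dx(v,t))^{-1}$, hence $\abs{\det(Dx^{-1}(u,t))}^p\,\dd u=\abs{\det(Dx(v,t))}^{-p}\abs{\det(Dx(v,t))}\,\dd v=\abs{\det(Dx(v,t))}^{1-p}\,\dd v$. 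Therefore
\begin{align*}
\int_{\Rd} p_t(u)^p\,\dd u=\int_{\Rd} p_0(v)^p\,\abs{\det(Dx(v,t))}^{1-p}\,\dd v=\int_{\Rd} p_0(v)^p\,\abs{\frac{1}{\det(Dx(v,t))}}^{p-1}\,\dd v
\end{align*}
for all $t\in[0,T]$ on $N^c$, where $p\ge1$ is used in the last step.

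Next I would take the supremum over $t\in[0,T]$ and use monotonicity of the integral to pull it inside:
\begin{align*}
\sup_{0\le t\le T}\int_{\Rd} p_t(u)^p\,\dd u\le\int_{\Rd} p_0(v)^p\,\sup_{0\le t\le T}\abs{\frac{1}{\det(Dx(v,t))}}^{p-1}\,\dd v .
\end{align*}
Taking expectations and applying Tonelli's theorem — legitimate since the integrands are nonnegative and $(v,\omega)\mapsto\sup_{0\le t\le T}\abs{1/\det(Dx(v,t))}^{p-1}$ is product-measurable by the continuity of $v\mapsto Dx(v,t)$ and of $t\mapsto Dx(v,t)$ from Theorem \ref{Thdiffeo} — yields
\begin{align*}
\E\left(\sup_{0\le t\le T}\int_{\Rd} p_t(u)^p\,\dd u\right)\le\int_{\Rd} p_0(v)^p\,\E\left(\sup_{0\le t\le T}\abs{\frac{1}{\det(Dx(v,t))}}^{p-1}\right)\dd v .
\end{align*}

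Now I would invoke the preceding lemma, which gives $\sup_{v\in\Rd}\E\big(\sup_{0\le t\le T}\abs{1/\det(Dx(v,t))}^q\big)<\infty$ for every $q\ge1$. For $p\ge2$ this applies directly with $q=p-1$; for $1\le p<2$, where $p-1\in[0,1)$, I would first dominate $W^{p-1}\le1+W$ for $W\ge0$ and reduce to $q=1$. Either way one obtains a finite constant $C=C(T,p)$ with $\E(\sup_{0\le t\le T}\abs{1/\det(Dx(v,t))}^{p-1})\le C^p$ uniformly in $v\in\Rd$, and substituting this bound gives
\begin{align*}
\E\left(\sup_{0\le t\le T}\int_{\Rd} p_t(u)^p\,\dd u\right)\le C^p\int_{\Rd} p_0(v)^p\,\dd v=C^p\,\norm{p_0}_{\lp{p}}^p .
\end{align*}
Taking $p$-th roots finishes the proof.

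The main obstacle I anticipate is not a single estimate but the bookkeeping around null sets and measurability: one must make sure that the diffeomorphism property holds on one common null set for all $t\in[0,T]$ at once, so that the change of variables and the pointwise-in-$t$ identity can be upgraded to an inequality uniform in $t$, and one must justify the exchange of expectation and spatial integral via Tonelli — both of which rest on the continuity and regularity statements already packaged in Theorem \ref{Thdiffeo} rather than on any new argument.
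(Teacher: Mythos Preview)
Your proposal is correct and follows essentially the same route as the paper: change of variables to rewrite $\int p_t^p$ as $\int p_0^p\,\abs{\det Dx}^{1-p}$, pull the supremum inside, apply Tonelli, and bound the Jacobian factor via the preceding lemma. You are in fact more careful than the paper about the null-set/measurability bookkeeping and about the case $1\le p<2$, where $p-1<1$ is not directly covered by the lemma's hypothesis $q\ge1$; the paper glosses over both points.
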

\begin{beweis}
By change of variables with the diffeomorphism $x(\cdot,t)$, we get 
\begin{align*}
    &\E\left(\sup_{0\le t \le T} \int_{\Rd} p_t(u)^p \dd u\right)\\
    &= \E\left(\sup_{0\le t \le T} \int_{\Rd} p_0(u)^p \frac{1}{\det(Dx(u,t))^{p-1}} \dd u\right) \\
    &\le \E\left(\int_{\Rd} p_0(u)^p \sup_{0\le t\le T} \frac{1}{\det(Dx(u,t))^{p-1}} \dd u\right)\\
    &\le \sup_{u\in \Rd}\E\left(\sup_{0\le t\le T} \frac{1}{\det(Dx(u,t))^{p-1}}\right) \int_{\Rd} p_0(u)^p \dd u = C \norm{p_0}^p_{\lp{p}(\Rd)}
\end{align*}
\end{beweis}
\begin{bemerkung}
The Lebesgue densities are solutions to SPDEs of Fokker-Planck type. Namely for all $\phi \in \ccc_c^2(\Rd)$, we have 
\begin{align*}
    \int_{\Rd }\phi(u)p_t(u) \dd u &= \int_{\Rd} \phi(x(u,t)) p_0(u) \dd u \\
    &= \int_{\Rd} \phi(u) p_0(u) \dd u + \\
     &= \sum_{i=1}^d\int_{\Rd} \int_0^t\partial_i \phi(x(u,s)) a^i(x(u,s),p_s,s)  \dd s   p_0(u) \dd \\
     &\sum_{k=0}^\infty\sum_{i,p=1}^d \int_0^t\partial_i \phi(x(u,s)) b^{i,p}_k(x(u,s),p_s,s) \dd B^p_k(s) \\
     &+\frac{1}{2} \sum_{i,j=1}^d \sum_{k=0}^\infty \int_0^t \partial_i\partial_j \phi(x(u,s)) b^{j,p}_k(x(u,s),p_s,s)b^{i,p}_k(x(u,s),p_s,s) \dd s p_0(u) \dd u\\
     &= \int_{\Rd} \phi(u) p_0(u) \dd u \\
     &+ \sum_{i=1}^d \int_0^t \int_{\Rd} \partial_i\phi(u) a^i(u,p_s,s) \dd u \dd s \\
     &\sum_{i,p=1}^d \sum_{k=0}^\infty\int_0^t \int_{\Rd} \partial_i\phi(u) b_k^{i,p}(u,p_s,s) \dd u \dd B^p_k(s) \\
     &\frac{1}{2}\sum_{i,j,p=1}^d \sum_{k=0}^\infty \int^t_{0}  \int_{\Rd}\partial_i\partial_j\phi(u)b_k^{i,p}(u,p_s,s)b_k^{j,p}(u,p_s,s)  \dd u\dd s
\end{align*}
 one can actually show, under some additional assumptions that this is the unique mass conservative and positive solution to the SPDE (i.e. $\int_{\Rd} p_t(u) \dd u=1$ and $p_t\ge0$  for all $t\ge 0$) for more details see \cite{https://doi.org/10.48550/arxiv.2207.05705}, for more SPDEs of this type see \cite{kotelenez2008stochastic}.
\end{bemerkung}


\section{Intermittency}
Now that we have collected the most important properties of $(p_t)_{t\ge 0}$ we may define the intermittency property mathematically. 
\begin{assumption}\raisebox{\ht\strutbox}{\hypertarget{(A4)}{}}
The Lebesgue density $p_0$ of the initial condition $\mu_0$ suffices $p_0\in \lp{p}(\Rd)$ for all $p\ge 1$ 
\end{assumption}
\begin{definition}[Intermittency]\label{DefIntermittency}
The random field $(p_t)_{t\ge 0}$ is intermittent, if 
\begin{align*}
    \lim_{t\to \infty} \frac{\ln(\int_{\Rd} p_t(u)^p \dd u)}{t} := \lambda_p
\end{align*}
exists and 
\begin{align*}
    \left(\frac{\lambda_p}{p}\right)_{p\ge 1}
\end{align*}
is strictly increasing.  
\end{definition}



The definition of the intermittency property is related to the moment Lyapunov exponents introduced by Khasminskii \cite{khasminskii2011stochastic}. These were used in the investigation of intermittency phenomena for the parabolic Anderson model (e.g. \cite{carmona1995stationary}, \cite{khoshnevisan2014analysis}). The implication of this behaviour, is that higher moments of the quantity $(p_t)_{t\ge 0}$ dominate lower moments as $t\to \infty$ i.e.
\begin{align*}
  \lim_{t\to \infty}  \frac{\norm{p_t}_{\lp{p}(\Rd)}}{\norm{p_t}_{\lp{q}(\Rd)}} = 0
\end{align*}
such behaviour usually occurs, whenever such a quantity converges to a function with very high peaks on small sets, or some function with close behaviour to the $\delta$ Dirac functional. 
\begin{beispiel}\label{BspIntermittenz}
Consider the following linear SDE with interaction 
\begin{align*}
dx(u,t) &= \int_{\Rd}A(v-x(u,t))\mu_t(\dd v) \dd t \sum_{p=1}^d \Sigma_p(v- x(u,t)) \mu_t(\dd v)\dd B^p(t)\\
\forall u\in \Rd \quad x(u,0) &= u 
\end{align*}
We can immediately compute the determinant of the derivative of $x(\cdot,t)$, with Lemma \ref{Liouville}
\begin{align*}
\det(Dx(u,t))= \exp\left(t(\tr(A) -\sum_{p=1}^d \tr(\Sigma^2_p)) + \sum^d_{p=1}\tr(\Sigma_p) B_p(t) \right)
\end{align*}
hence we obtain from the transformation rule under diffeomorphisms 
\begin{align*}
    \int_{\Rd} p_t(u)^p \dd u = \exp\left(-(p-1)\left(t(\tr(A)\sum_{p=1}^d \tr(\Sigma^2_p)) + \sum^d_{p=1}\tr(\Sigma_p) B_p(t)\right)\right) \norm{p_0}^p_{\lp{p}(\Rd)}.
 \end{align*}
 Thus we can compute with the strong law of large numbers for the Brownian motion
 \begin{align*}
     \lambda_p = \lim_{t\to \infty} \frac{-(p-1)\left( t(\tr(A)\sum_{p=1}^d \tr(\Sigma^2_p)) + \sum^d_{p=1}\tr(\Sigma_p) B_p(t)  \right)}{t} =  -(p-1)\left(\tr(A)-\frac{1}{2} \sum_{p=1}\tr(\Sigma^2_p)\right). 
 \end{align*}
 We finally conclude that in this case $(p_t)_{t\ge 0}$ is intermittent if and only if 
 \begin{align*}
     \tr(A)-\frac{1}{2}\sum_{p=1}^d \tr(\Sigma^2_p)< 0
 \end{align*}
\end{beispiel}

The structure of the problem suggests, that it can be tackled by considering the asymptotics, of $\dett(Dx(\cdot,t))$ only. In order to do so it is natural to investigate Lyapunov exponents. In order to get a similar situation, as in Example \ref{BspIntermittenz} we will consider Lyapunov exponents

\begin{definition}The limit
\begin{align*}
    \lim_{t\to \infty} \frac{\ln(\det(Dx(u,t)))}{t} = \lambda(u)\fastsicher
\end{align*}
    is called Lyapunov exponent. 
\end{definition}
\begin{bemerkung}
    Note that the main difficulty in guaranteeing existence of Lyapunov exponents, lies within the fact, that SDEs with interaction are nonautonomous with respect to $x(u,t)$. Thus we can not simply apply techniques, which are well known in the theory of random dynamical systems. However even if $(x_\mu(\cdot,t),\mu_t)$, where $\mu$ in the index denotes the initial measure,  forms a random dynamical system, to which one could try to apply random dynamical system approaches. This path turns out to be not as successful as the one presented in this article.
\end{bemerkung}
 The Lyapunov exponent yields us 
 \begin{align}\label{Lyapdet}
     \det(Dx(u,t))\approx \exp(t \lambda(u))
 \end{align}

 for $t\gg 1$. With a method of steepest descent approach we thus get 

\begin{align*}
    \lim_{t\to \infty} \frac{\ln\left(\int_{\Rd} p_0(u)^p \exp(-(p-1)t\lambda (u)) \right)}{t} = \sup_{u\in K} -\lambda(u) (p-1).
\end{align*}
if $\supp(p_0)\subset K $ for a compact $K\subset \Rd$. We will first justify \eqref{Lyapdet} under certain conditions.   
\begin{assumption}\raisebox{\ht\strutbox}{\hypertarget{(A5)}{}}
    Let $a$ and $(b_k)_{k\ge 0}$ be such that there exists a function $\phi$ and $\alpha>0$ with 
    \begin{align*}
        \skalar{u-v,\phi(u)-\phi(v)} \le -\alpha \abs{u-v}^2
    \end{align*}
    for all $u,v,\in \Rd$ and $(b_k)_{k\ge 0}$ such that $2\alpha -B^2(4q-1) >0$ where $B$ is the Lipschitz constant of $(b_k)_{k\ge 0}$ with respect to the $l^2$-norm and for some $q>\max\{d,\delta\}$, if $(b_k)_{k\ge0}$ is not bounded in the $l^2$-sense and $2\alpha -B^2(2q-1)>0$ if $(b_k)_{k\ge 0}$ is bounded. Furthermore let 
    \begin{align*}
        a(u,\mu) = \int_{\Rd} \phi(u-v) \mu(\dd v)
    \end{align*}
    for all $u\in \Rd,\mu\in \M$, we will assume further, that $\phi$ is differentiable with bounded, $\delta$-Hölder derivative.
 \end{assumption}

 \begin{bemerkung}
     As we will later prove rigorously,  Assumption \hyperlink{(A5)}{(A5)} guarantees, that
     \begin{align*}
         \lim_{t\to \infty}\abs{x(u,t)-x(v,t)} =0 \fastsicher
     \end{align*}
     for all $u,v\in \Rd$.
     In other words, the trajectories from two different starting points approach each other almost surely, as $t\to \infty$.
 \end{bemerkung}
We will not comment on the case where $(b_k)_{k\ge 0}$ is bounded. Since this case distinction is only relevant in the proof of Lemma \ref{BVArzela}
 \begin{lemma}\label{Verschiebungskompakt}
     Under the assumptions \hyperlink{(A1)}{(A1)} and \hyperlink{(A5)}{(A5)}, if $p\ge 1$ fulfills 
     \begin{align*}
         -2\alpha + B^2(2p-1) <0 
     \end{align*}
     then for all $t\ge 0$ and $u,v\in \Rd$, we get
     \begin{align*}
         \E(\abs{x(u,t)-x(v,t)}^{2p})\le \abs{u-v}^{2p}
     \end{align*} 
 \end{lemma}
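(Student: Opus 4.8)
The plan is to control the difference process $Y(t):=x(u,t)-x(v,t)$ directly by applying It\^o's formula to $\abs{Y(t)}^{2p}$. Subtracting the integral form of \eqref{DefSDEint} for $x(u,\cdot)$ and $x(v,\cdot)$, with the measure valued process $(\mu_t)_{t\ge 0}$ fixed, the process $Y$ solves
\begin{align*}
dY(t) = \big(a(x(u,t),\mu_t,t)-a(x(v,t),\mu_t,t)\big)\dd t + \sum_{k=0}^\infty \big(b_k(x(u,t),\mu_t,t)-b_k(x(v,t),\mu_t,t)\big)\dd B_k(t).
\end{align*}
Using the structural form $a(w,\mu)=\int_{\Rd}\phi(w-z)\,\mu(\dd z)$ from Assumption \hyperlink{(A5)}{(A5)} together with $(x(u,t)-z)-(x(v,t)-z)=Y(t)$ for every $z\in\Rd$, the one-sided Lipschitz estimate gives
\begin{align*}
\skalarq{Y(t),\, a(x(u,t),\mu_t,t)-a(x(v,t),\mu_t,t)} &= \int_{\Rd}\skalarq{Y(t),\,\phi(x(u,t)-z)-\phi(x(v,t)-z)}\,\mu_t(\dd z)\\
&\le -\alpha\abs{Y(t)}^2,
\end{align*}
and, writing $\sigma_k(t):=b_k(x(u,t),\mu_t,t)-b_k(x(v,t),\mu_t,t)$, the Lipschitz property of $(b_k)_{k\ge 0}$ with constant $B$ yields $\sum_{k=0}^\infty\norm{\sigma_k(t)}_{HS}^2\le B^2\abs{Y(t)}^2$.

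Next I would apply It\^o's formula to $g(y)=\abs{y}^{2p}$, which is $\ccc^2$ since $2p\ge 2$, with $\nabla g(y)=2p\abs{y}^{2p-2}y$. Using $\dd\skalarq{Y^i,Y^j}_t=\sum_{k}(\sigma_k(t)\sigma_k(t)^\top)^{i,j}\,\dd t$, the drift contribution is $2p\abs{Y(t)}^{2p-2}\skalarq{Y(t),a(x(u,t),\mu_t,t)-a(x(v,t),\mu_t,t)}\le -2p\alpha\abs{Y(t)}^{2p}$, while the second-order term equals
\begin{align*}
p\abs{Y(t)}^{2p-2}\sum_{k=0}^\infty\norm{\sigma_k(t)}_{HS}^2 + 2p(p-1)\abs{Y(t)}^{2p-4}\sum_{k=0}^\infty\abs{\sigma_k(t)^\top Y(t)}^2 \le pB^2(2p-1)\abs{Y(t)}^{2p},
\end{align*}
where I used $\abs{\sigma_k^\top Y}^2\le\norm{\sigma_k}_{HS}^2\abs{Y}^2$, $p\ge 1$, and the Lipschitz bound above. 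The stochastic integral has zero expectation: by the Corollary following Theorem \ref{ThExiSDE} one has $\sup_{s\le T}\E\abs{Y(s)}^q<\infty$ for every $q\ge 1$, so it is a genuine martingale; alternatively one localises with $\tau_n=\inf\{t\ge 0:\abs{Y(t)}\ge n\}$ and lets $n\to\infty$.

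Setting $f(t):=\E\abs{Y(t)}^{2p}$ and taking expectations in the It\^o expansion, one obtains
\begin{align*}
f(t) \le \abs{u-v}^{2p} + c\int_0^t f(s)\,\dd s, \qquad c:=p\big(-2\alpha+B^2(2p-1)\big)<0,
\end{align*}
the strict negativity of $c$ being exactly the hypothesis of the lemma. Gr\"onwall's lemma then yields $f(t)\le\abs{u-v}^{2p}e^{ct}\le\abs{u-v}^{2p}$ for all $t\ge 0$, which is the claim. I expect the main obstacle to be the rigorous handling of the stochastic part --- justifying the vanishing expectation and the termwise differentiation of the series in $k$ inside It\^o's formula, the latter being permissible by the uniform convergence of the relevant series already established in the proof of Theorem \ref{Thdiffeo} --- while the dissipativity cancellation and the final Gr\"onwall step are routine.
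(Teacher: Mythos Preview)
Your proof is correct and follows essentially the same route as the paper: apply It\^o's formula to $\abs{x(u,t)-x(v,t)}^{2p}$, use the dissipativity of $\phi$ inside the $\mu_t$-integral for the drift, bound the second-order term via the Lipschitz constant $B$, and observe that the resulting coefficient $p(-2\alpha+B^2(2p-1))$ is negative. The only cosmetic difference is that you finish with Gr\"onwall (obtaining the sharper $e^{ct}$ decay), whereas the paper simply notes that the integral contribution is non-positive and concludes directly; your treatment of the martingale part is also more careful than the paper's.
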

 \begin{beweis}
     Note first, that by Itô's formula, we have 
          \begin{align*}
     \begin{split}
         \E(\abs{x(u,t)-x(v,t)}^{2p})&= \abs{u-v}^{2p} \\&+ \E\Big(  2p \int_0^t \int_{\Rd} \abs{x(u,s)-x(v,s)}^{2p-2} \\
         &\int_{\Rd} \skalar{x(u,s)-x(v,s),\phi(x(u,s)-x(r,s))-\phi(x(v,s)-x(r,s))} \mu(\dd r) \dd s \Big)\\
         &+ 2p\int_0^t \abs{x(u,s)-x(v,s)}^{2p-2} \sum_{k=0}^\infty \norm{b_k(x(u,s),\mu_s)-b_k(x(v,s),\mu_s)}^2_{HS} \dd s\\
         &+2p(p-1) \sumkinfty\sum_{l=1}^d \int^t_{0} 
        \abs{x(u,s)-x(v,s)}^{2p-4} \\
         \times&\skalar{x(u,s)-x(v,s),b^{\cdot,l}_k(x(u,s),\mu_s)-b^{\cdot,l}_k(x(v,s),\mu_s)}^2 \dd s \\
         &\le\abs{u-v}^{2p} +-2p\alpha + pB^2 +2p(p-1) B^2 \le \abs{u-v}^{2p}
         \end{split}
     \end{align*}
 \end{beweis}
\begin{lemma}\label{Lyapneg}
Assume \hyperlink{(A1)}{(A1)},\hyperlink{(A2)}{(A2)} and \hyperlink{(A5)}{(A5)}. Then we have for all $u\in \Rd$: 
\begin{align*}
   \limsup_{t\to \infty} \frac{\ln(\det(Dx(u,t)))}{t} < 0 \fastsicher
\end{align*}

\end{lemma}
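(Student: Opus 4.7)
The plan is to use Theorem~\ref{Liouville} to write $\ln(\det(Dx(u,t)))$ explicitly as the sum of an absolutely continuous drift term and a continuous local martingale, then to show that, divided by $t$, the drift is asymptotically bounded above by a negative constant while the martingale vanishes almost surely.

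First I would analyze the drift. The key observation is that the dissipativity condition on $\phi$ in Assumption~\hyperlink{(A5)}{(A5)} passes to a pointwise bound on the symmetrized derivative of $\phi$: taking $v=u-\eps h$ in $\langle u-v,\phi(u)-\phi(v)\rangle\le-\alpha\abs{u-v}^2$ and letting $\eps\to 0$ yields $\langle h,D\phi(u)h\rangle\le-\alpha\abs{h}^2$ for every $h\in\Rd$. Summing over an orthonormal basis gives $\tr(D\phi(u))\le-d\alpha$, and since $a(u,\mu)=\int_{\Rd}\phi(u-v)\mu(\dd v)$, differentiating under the integral sign produces $\dive(a(\cdot,\mu))(u)\le-d\alpha$ uniformly in $u$ and $\mu$. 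For the Itô-correction term $-\tfrac{1}{2}\sum_{k,p}\tr((Db_k^{\cdot,p})^2)$ I would use the elementary inequality $\abs{\tr(M^2)}\le\norm{M}_{HS}^2$ for square matrices together with the $l^2$-Lipschitz assumption in \hyperlink{(A5)}{(A5)}: differentiating the bound $\sum_k\norm{b_k(u,\mu,t)-b_k(v,\mu,t)}_{HS}^2\le B^2\abs{u-v}^2$ along each basis direction gives $\sum_{k,p}\norm{Db_k^{\cdot,p}}_{HS}^2\le dB^2$, whence this correction is uniformly bounded above by $dB^2/2$. Putting these together, the drift satisfies
\begin{align*}
\int_0^t \dive(a)(x(u,s))\,\dd s - \tfrac{1}{2}\sum_{k,p}\int_0^t\tr((Db_k^{\cdot,p})^2)\,\dd s \;\le\; t\bigl(-d\alpha+\tfrac{dB^2}{2}\bigr).
\end{align*}

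Next I would dispatch the martingale part $M_t:=\sum_{k,p}\int_0^t\dive(b_k^{\cdot,p}(\cdot,\mu_s,s))(x(u,s))\,\dd B_k^p(s)$. By assumption \hyperlink{(A2)}{(A2)}(iii) the integrand is square-summable and bounded, so the quadratic variation satisfies $\langle M\rangle_t\le Ct$ for a deterministic $C>0$. By the Dambis–Dubins–Schwarz theorem $M_t=\beta_{\langle M\rangle_t}$ for a standard real Brownian motion $\beta$, and the strong law $\beta_s/s\to 0$ a.s.\ then implies $M_t/t\to 0$ almost surely.

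Combining the two steps gives
\begin{align*}
\limsup_{t\to\infty}\frac{\ln\det(Dx(u,t))}{t}\;\le\; d\Bigl(\tfrac{B^2}{2}-\alpha\Bigr)\quad\W\fast,
\end{align*}
and the numerical constraint in \hyperlink{(A5)}{(A5)} ensures $2\alpha>B^2(2q-1)\ge B^2$ (since $q>d\ge 1$), so the right-hand side is strictly negative. The main obstacle will be keeping the constants aligned: I need to carefully convert the $l^2$-Lipschitz constant $B$ appearing in \hyperlink{(A5)}{(A5)} into the uniform bound on $\sum_{k,p}\norm{Db_k^{\cdot,p}}_{HS}^2$ that controls the Itô correction, and to verify that the precise numerical condition on $\alpha$ and $B$ in \hyperlink{(A5)}{(A5)} is exactly what is needed to push the drift constant below zero; everything else is standard matrix algebra and a martingale law of large numbers.
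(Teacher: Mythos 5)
Your proof is correct and follows essentially the same route as the paper: apply Theorem~\ref{Liouville} to split $\ln\det Dx(u,t)$ into a drift and a local martingale, bound the drift by $d(B^2/2-\alpha)\,t<0$ using the dissipativity of $\phi$ and the $l^2$-Lipschitz control on $(b_k)$, and show the martingale is $o(t)$. The only genuine difference is in the martingale step: you invoke Dambis--Dubins--Schwarz together with the Brownian strong law $\beta_s/s\to0$, whereas the paper applies the strong law of large numbers for continuous local martingales (Liptser--Shiryaev) after checking $\int_0^\infty (1+t)^{-2}\,\dd\langle M\rangle_t<\infty$. Both work; if you keep the DDS route, spell out the two cases $\langle M\rangle_\infty=\infty$ (use $\beta_s/s\to 0$ and $\langle M\rangle_t\le Ct$) and $\langle M\rangle_\infty<\infty$ (the martingale converges a.s., so trivially $M_t/t\to 0$), and note that one extends $\beta$ as a Brownian motion past time $\langle M\rangle_\infty$ in the latter case; your one-line assertion that ``$\beta_s/s\to 0$ and $\langle M\rangle_t\le Ct$ imply $M_t/t\to 0$'' glosses over this distinction.
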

\begin{beweis}
We will first show, that the assumptions imply for all $u\in \Rd$ and $\mu\in \M$
\begin{align*}
    \dive(a(\cdot,\mu))(u) \le -d\alpha.
\end{align*}
Let $j=1,\dots,d$ and $u,v\in \Rd$ such that $u_j\neq v_j$. We get 

\begin{align*}
    \sum_{i=1}^d (u_i-v_i)(\phi^i(u)-\phi^i(v) ) \le -\alpha \sum_{i=1}^d (u_i-v_i)^2 \le -\alpha (u_j- v_j)^2     
\end{align*}
By Letting $u_i \to v_i$ for all $i\neq j$, this implies
\begin{align*}
    \frac{\phi^{j}(u)-\phi^j(v)}{u_j-v_j} \le -\alpha
\end{align*}
which finally yields 
\begin{align*}
    \dive(\phi(\cdot))(u)\le -d\alpha
\end{align*} implying 
\begin{align*}
    \dive(a(\cdot,\mu))(u)\le -d\alpha.
\end{align*} 
We will proceed by showing that
\begin{align*}
-\sum_{p=1}^d\sum_{k=0}^\infty \tr((Db^{\cdot,p}_k)^2)\le dB^2.    
\end{align*} First we need to compute the optimal Lipschitz constant of $(b_k)_{k\ge 0}$ in the $l^2$-sense. Consider for $u,v\in \Rd$ and $\mu\in \M$
\begin{align*}
   & \sum_{p=1}^d\sum_{k=0}^\infty \abs{b_k^{\cdot,p}(u,\mu)-b_k^{\cdot,p}(v,\mu)}^2\\ 
   =& \sum_{p=1}^d \sum_{k=0}^\infty \abs{\int_0^1 Db^{\cdot,p}_k(u+t(v-u),\mu)\dd t(u-v)}^2 \\
   \le&\abs{u-v}^2\sum^d_{p=1} \sum_{k=0}^\infty \norm{\int_0^1 Db^{\cdot,p}_k(u+t(v-u),\mu)\dd t}^2_{OP} \\
\le& \abs{u-v}^2\sup_{u,v\in \Rd} \sumkinfty \sum_{p=1}^d \norm{\int_0^1 Db^{\cdot,p}_k(u+t(v-u),\mu)\dd t}^2_{OP} \\
   =& \abs{u-v}^2 \sup_{u\in \Rd} \sum^d_{p=1} \sum_{k=0}^\infty \norm{Db^{\cdot,p}_k(u,\mu)}^2_{OP}
\end{align*}
where $\norm{\cdot}_{OP}$ denotes the operator norm. 
The last equality can be shown in the following way, for all $u\in \Rd$ and $\mu\in \M$ we have 
\begin{align*}
   &\sumkinfty\sum_{p=1}^d \norm{Db_k^{\cdot,p}(u,\mu)}^2_{OP}\\
   =& \sumkinfty\sum_{p=1}^d \norm{\int_0^1 Db_k^{\cdot,p}(u+t(u-u),\mu) \dd t}^2_{OP}\\
   \le& \sup_{u,v\in \Rd}\sumkinfty \sum_{p=1}^d \norm{\int_0^1Db_k^{\cdot,p}(u+t(u-v),\mu) \dd t }^2_{OP}
\end{align*}
thus $"\ge"$ follows, for $"\le"$ observe that for all $u,v\in \Rd$ and $\mu\in \M$ we have
\begin{align*}
    &\sumkinfty \sum_{p=1}^d \norm{\int_0^1Db_k^{\cdot,p}(u+t(u-v),\mu) \dd t }^2_{OP}\\
    \le& \sumkinfty\sum_{p=1}^d \int_0^1 \norm{Db_k^{\cdot,p}(u+t(u-v),\mu)}^2_{OP} \dd t\\
    =& \int_0^1 \sumkinfty\sum_{p=1}^d  \norm{Db_k^{\cdot,p}(u+t(u-v),\mu)}^2_{OP} \dd t \\
    \le & \sup_{u\in \Rd} \sumkinfty \sum_{p=1}^d \norm{Db_k^{\cdot,p}(u,\mu)}^2. 
\end{align*}
Hence, we can finally conclude
\begin{align*}
    B^2 = \sup_{u\in \Rd,\mu\in \M} \sum^d_{p=1} \sum_{k=0}^\infty \norm{Db^{\cdot,p}_k(u,\mu)}^2_{OP}.
\end{align*}

Now we have, for all $u\in \Rd,\mu\in\M$
\begin{align*}
    &\abs{\sum^d_{p=1}\sum_{k=0}^\infty\tr((Db^{\cdot,p}_k(u,\mu))^2)}\\
    &\le \abs{\sum^d_{p=1}\sum_{k=0}^\infty \sum^d_{i,j=d} \left(Db^{\cdot,p}_k(u,\mu)\right)_{i,j}\left(Db^{\cdot,p}_k(\cdot,\mu)(u)\right)_{j,i}}\\
    &\le \sum_{k=0}^\infty  \sum_{p=1}^d \norm{Db^{\cdot,p}_k(\cdot,\mu)(u)}^2_{HS}\\
    &\le dB^2
\end{align*}
where we used $\norm{A}_{HS}\le \sqrt{d}\norm{A}_{OP}$ for all $A\in \R^{d \times d}$. Therefore 
\begin{align*}
    -\frac{1}{2} \sum_{p=1}^d \sum_{k=0}^\infty \tr(\left(Db^{\cdot,p}_k(u,\mu)\right)^2)\le d\frac{B^2}{2}
\end{align*}
for all $u\in \Rd,\mu\in \M$. 
Finally we arrive at 
\begin{align*}
   & \limsup_{t\to \infty} \frac{\ln(\det Dx(u,t))}{t} \\ 
   &\le \limsup_{t\to \infty} \frac{1}{t} \int_0^t \left(\dive(a) -\frac{1}{2}\sum^d_{p=1}\sum_{k=0}^\infty \tr((Db^{\cdot,p}_k)^{2})\right)(x(u,t),\mu_t) \\
   &+\limsup_{t\to \infty} \frac{1}{t} \sum_{k=0}^\infty \sum_{p=1}^d\int_0^t \dive(b^{\cdot,p}_k)(x(u,s),\mu_s) \dd B^p_k(s)\\
   &\le \limsup_{t \to \infty}  \frac{1}{t} \int_0^t d(-\alpha+\frac{B^2}{2}) \dd s < 0 \fastsicher
 \end{align*}

where we used the strong law of large numbers for martingales (Theorem 9, p.142 \cite{liptser2012theory}) in the last estimate. Which can be applied since 
\begin{align*}
    \int_0^\infty \frac{\sumkinfty\sum_{p=1}^d \left(\dive(b_k^{\cdot,p}(\cdot,\mu_t))(x(u,t))\right)^2}{(1+t)^2} \dd t \le C\int_0^\infty \frac{1}{(1+t)^2} \dd t<\infty 
\end{align*}
since the second term is bounded by $\sup_{u,\in \Rd, \mu\in \M} \sum_{p=1}^d \sum_{k=0}^\infty \dive(b^{\cdot,p}_k(u,\mu))^2$ which is finite by \hyperlink{(A2)}{(A2)}.
\end{beweis}
\begin{bemerkung}
    We will show in Lemma \ref{MartglmKonvfs} that the null set which is a result of Lemma \ref{Lyapneg} does not depend on $u\in \Rd$.
 \end{bemerkung}
 We want to show that the Lyapunov exponents are relatively compact in $C(K)$ lamost surely for all $K\subset \Rd$.  

\begin{notation}
We will write 
\begin{align*}
    \frac{\ln(\det(Dx(u,t))}{t} &= f_t(u) = \frac{BV_t(u)}{t} + \frac{M_t(u)}{t}\\
    &=\frac{\int_0^t \dive(a(\cdot,\mu_s))(x(u,s))-\frac{1}{2}\sum_{p=1}^d \tr\left(\left(Db^{\cdot,p}_k(\cdot,\mu_s)(x(u,s))\right)^2\right)\dd s}{t}\\
    &+\frac{\sum_{k=0}^\infty \sum_{p=1}^d \int_0^t \dive(b^{\cdot,p}_k(\cdot,\mu_s))(x(u,s))\dd B^p_k(s)}{t}
\end{align*} 
where $BV$ stands for the bounded variation part and $M$ for the martingale part.
\end{notation}

\begin{lemma}\label{HölderBV}
Under \hyperlink{(A2)}{(A2)}, we have for all $u,v\in \Rd$
\begin{align*}
    \frac{\abs{BV_t(u)-BV_t(v)}}{t} \le C \frac{\int_0^t \abs{x(u,s)-x(v,s)}^\delta \dd s}{t} \fastsicher .
\end{align*}
 
\end{lemma}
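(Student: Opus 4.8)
Write the bounded–variation part as a single integral of a spatial function evaluated along the trajectories, i.e.
\begin{align*}
BV_t(u)-BV_t(v)=\int_0^t\big(G(x(u,s),\mu_s)-G(x(v,s),\mu_s)\big)\,\dd s,
\end{align*}
where $G(w,\mu):=\dive(a(\cdot,\mu))(w)-\tfrac12\sum_{k=0}^\infty\sum_{p=1}^d\tr\big((Db_k^{\cdot,p}(\cdot,\mu)(w))^2\big)$. It then suffices to show that $G$ is $\delta$-Hölder in its spatial argument \emph{uniformly in} $\mu\in\M$: once we have $\abs{G(w_1,\mu)-G(w_2,\mu)}\le C\abs{w_1-w_2}^\delta$, integrating along the trajectories and dividing by $t$ gives exactly the claimed bound (here $x$ is the diffeomorphic version from Theorem~\ref{Thdiffeo}, so everything is defined off a fixed null set).

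\textbf{The divergence term.} Since $\dive(a(\cdot,\mu))(w)=\sum_{i=1}^d\partial_{w_i}a^i(w,\mu)$, bounding each partial derivative by the full Jacobian norm and using \hyperlink{(A2)}{(A2)}~$ii)$ yields
$\abs{\dive(a(\cdot,\mu))(w_1)-\dive(a(\cdot,\mu))(w_2)}\le d\,\abs{Da(w_1,\mu)-Da(w_2,\mu)}\le dC\abs{w_1-w_2}^\delta$, uniformly in $\mu$.

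\textbf{The second-order term.} Use the elementary identity $\tr(A^2)-\tr(B^2)=\tr\big(A(A-B)\big)+\tr\big((A-B)B\big)$, which gives $\abs{\tr(A^2)-\tr(B^2)}\le\norm{A-B}_{HS}\big(\norm{A}_{HS}+\norm{B}_{HS}\big)$. Apply this with $A=Db_k^{\cdot,p}(\cdot,\mu)(w_1)$, $B=Db_k^{\cdot,p}(\cdot,\mu)(w_2)$, sum over $k$ by Cauchy--Schwarz:
\begin{align*}
\sum_{k=0}^\infty\abs{\tr(A_k^2)-\tr(B_k^2)}\le\Big(\sum_{k=0}^\infty\norm{A_k-B_k}_{HS}^2\Big)^{1/2}\Big(\sum_{k=0}^\infty\big(\norm{A_k}_{HS}+\norm{B_k}_{HS}\big)^2\Big)^{1/2}.
\end{align*}
The first factor is $\le C\abs{w_1-w_2}^\delta$ by \hyperlink{(A2)}{(A2)}~$ii)$; the second is bounded (by $2C$) using $(\alpha+\beta)^2\le 2\alpha^2+2\beta^2$ and the uniform bound \hyperlink{(A2)}{(A2)}~$iii)$. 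Summing over $p=1,\dots,d$ costs only a factor $d$. Hence $\abs{G(w_1,\mu)-G(w_2,\mu)}\le C\abs{w_1-w_2}^\delta$ for all $w_1,w_2\in\Rd$, $\mu\in\M$.

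\textbf{Conclusion and main point.} Combining the two estimates and integrating,
\begin{align*}
\abs{BV_t(u)-BV_t(v)}\le\int_0^t\abs{G(x(u,s),\mu_s)-G(x(v,s),\mu_s)}\,\dd s\le C\int_0^t\abs{x(u,s)-x(v,s)}^\delta\,\dd s,
\end{align*}
and dividing by $t$ finishes the proof. The argument is essentially bookkeeping; the only step needing care is the passage through the infinite family $(b_k)_{k\ge0}$, which is handled cleanly by Cauchy--Schwarz together with the uniform Hölder and boundedness hypotheses in \hyperlink{(A2)}{(A2)} — there is no genuine obstacle here.
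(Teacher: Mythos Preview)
Your proof is correct and follows essentially the same route as the paper's: both reduce the claim to showing that the integrand $G(\cdot,\mu)$ is $\delta$-H\"older uniformly in $\mu$, handle the $\dive(a)$ part directly from \hyperlink{(A2)}{(A2)}~$ii)$, and treat the trace term by combining the H\"older bound \hyperlink{(A2)}{(A2)}~$ii)$ with the uniform bound \hyperlink{(A2)}{(A2)}~$iii)$ via Cauchy--Schwarz over $k$. The only cosmetic difference is that you use the clean identity $\tr(A^2)-\tr(B^2)=\tr(A(A-B))+\tr((A-B)B)$ whereas the paper expands $\tr((Db_k^{\cdot,p})^2)$ entrywise; the resulting estimates are the same.
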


\begin{beweis}
    It suffices to show that $\sum_{p=1}^d \sum_{k=0}^\infty \tr((Db^{\cdot,p}_k)^2)$ is Hölder continuous for all $\mu\in \M$, this combined with \hyperlink{(A2)}{(A2)} we get the result immediately from the definition of $BV_t$ for $t\ge 0$.   
    \begin{align*}
      &\abs{\sum_{p=1}^d \sum_{k=0}^\infty \tr((Db^{\cdot,p}_k)(u,\mu)^2)-\sum_{p=1}^d \sum_{k=0}^\infty \tr((Db^{\cdot,p}_k)(v,\mu)^2)}  \\
      &\le \sum_{k=0}^\infty \sum^d_{p=1} \sum_{i,j=1}^d \abs{\frac{\partial b^{i,p}_k}{\partial u_j}\frac{\partial b^{j,p}_k}{\partial u_i}(u,\mu)- \frac{\partial b^{i,p}_k}{\partial u_j}\frac{\partial b^{j,p}_k}{\partial u_i}(v,\mu)}\\
      &\le \sum_{k=0}^\infty \sum^d_{p=1} \sum_{i,j=1}^d \abs{\frac{\partial b^{i,p}_k}{\partial u_j}(u,\mu)- \frac{\partial b^{i,p}_k}{\partial u_j}(v,\mu)}\abs{\frac{\partial b^{j,p}_k}{\partial u_i}(u,\mu)}\\
      &+\sum_{k=0}^\infty \sum^d_{p=1} \sum_{i,j=1}^d \abs{\frac{\partial b^{j,p}_k}{\partial u_i}(u,\mu)- \frac{\partial b^{j,p}_k}{\partial u_i}(v,\mu)}\abs{\frac{\partial b^{i,p}_k}{\partial u_j}(v,\mu)}\\
      &\le 2 \sup_{u\in \Rd}\left(\sumkinfty \norm{Db_k(u,\mu)}^2_{HS} \right)^{\frac{1}{2}} \\
      &\times 2 \left(\sumkinfty \norm{Db_k(u,\mu)-Db_k(v,\mu)}^2_{HS}  \right)^{\frac{1}{2}}\\
      &  \le C\abs{u-v}^\delta      \fastsicher
     \end{align*} 
\end{beweis}

\begin{lemma}\label{MartglmKonvfs}
    Under the assumptions \hyperlink{(A1)}{(A1)},\hyperlink{(A2)}{(A2)} and \hyperlink{(A5)}{(A5)} we obtain

    \begin{align*}
        \lim_{t\to \infty} \sup_{u\in K} \frac{M_t(u)}{t}=0 \fastsicher
    \end{align*}
    for all compact $K\subset \Rd$.
\end{lemma}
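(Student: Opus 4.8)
The plan is to upgrade the pointwise‑in‑$u$ behaviour already used inside the proof of Lemma~\ref{Lyapneg} (where $M_t(u)/t\to0$ was obtained from the strong law of large numbers for martingales) to a statement that is uniform on compacts, by proving a Kolmogorov--Chentsov regularity estimate for the random field $u\mapsto M_t(u)$ and then running a Borel--Cantelli argument over the time windows $[n,n+1]$, $n\in\N$. Recall that $M_t(u)=\sum_{k=0}^\infty\sum_{p=1}^d\int_0^t\dive(b_k^{\cdot,p}(\cdot,\mu_s))(x(u,s))\,\dd B_k^p(s)$ is a continuous local martingale (the series converges in $L^2$ by \hyperlink{(A2)}{(A2)}) whose quadratic variation has density $\sum_{k,p}\bigl(\dive(b_k^{\cdot,p}(\cdot,\mu_s))(x(u,s))\bigr)^2$, bounded by a constant uniformly in $s$, $u$ and $\omega$ by \hyperlink{(A2)}{(A2)}.

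First I would record the moment estimates. By the Burkholder--Davis--Gundy inequality, for an integer $m$,
$$\E\bigl[|M_t(u)-M_t(v)|^{2m}\bigr]\le C_m\,\E\Bigl[\Bigl(\int_0^t\sum_{k,p}\bigl|\dive(b_k^{\cdot,p}(\cdot,\mu_s))(x(u,s))-\dive(b_k^{\cdot,p}(\cdot,\mu_s))(x(v,s))\bigr|^2\,\dd s\Bigr)^m\Bigr].$$
Since the divergences of the $b_k^{\cdot,p}$ are $\delta$-Hölder with a square‑summable‑in‑$k$ constant (from \hyperlink{(A2)}{(A2)}~ii)), the integrand is $\le C|x(u,s)-x(v,s)|^{2\delta}$; Jensen's inequality in the time variable followed by Lemma~\ref{Verschiebungskompakt} — which is exactly where the dissipativity assumption \hyperlink{(A5)}{(A5)} enters, providing $\E[|x(u,s)-x(v,s)|^{2\delta m}]\le|u-v|^{2\delta m}$ \emph{uniformly} in $s\ge0$ as long as $\delta m$ lies in the admissible range fixed by \hyperlink{(A5)}{(A5)} — yields
$$\E\bigl[|M_t(u)-M_t(v)|^{2m}\bigr]\le C\,t^{m}\,|u-v|^{2\delta m},\qquad\E\bigl[|M_t(u)|^{2m}\bigr]\le C\,t^{m}.$$
The same BDG/Doob computation applied to $M_\cdot(u)-M_n(u)$ on $[n,n+1]$, using boundedness of the quadratic variation density, gives, with constants independent of $n$,
$$\E\Bigl[\sup_{t\in[n,n+1]}\bigl|(M_t(u)-M_n(u))-(M_t(v)-M_n(v))\bigr|^{2m}\Bigr]\le C\,|u-v|^{2\delta m},\qquad\E\Bigl[\sup_{t\in[n,n+1]}|M_t(u)-M_n(u)|^{2m}\Bigr]\le C.$$

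Next, fix a compact $K\subset\Rd$ and choose the integer $m$ so that $2\delta m>d$ (for Kolmogorov--Chentsov on $\Rd$), $\delta m$ is admissible for Lemma~\ref{Verschiebungskompakt}, and $m\ge2$. Applying the Kolmogorov--Chentsov continuity theorem in the spatial variable — once to $u\mapsto M_n(u)$ and once to $u\mapsto\sup_{t\in[n,n+1]}|M_t(u)-M_n(u)|$, or, more cleanly, the spatio‑temporal version to $(u,t)\mapsto M_t(u)-M_n(u)$ on $K\times[n,n+1]$ combining the spatial Hölder exponent $\delta$ with the temporal exponent $1/2$ — yields continuous modifications together with $\E[\sup_{u\in K}|M_n(u)|^{2m}]\le Cn^{m}$ and $\E[\sup_{u\in K}\sup_{t\in[n,n+1]}|M_t(u)-M_n(u)|^{2m}]\le C$, $C$ independent of $n$. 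Hence $\E[\sup_{u\in K}\sup_{t\in[n,n+1]}|M_t(u)|^{2m}]\le C(n+1)^{m}$, so
$$\E\Bigl[\sup_{u\in K}\ \sup_{t\in[n,n+1]}\Bigl|\frac{M_t(u)}{t}\Bigr|^{2m}\Bigr]\le\frac{C(n+1)^m}{n^{2m}}\le\frac{C'}{n^m}.$$
Since $m\ge2$ the series $\sum_n n^{-m}$ converges, so by Borel--Cantelli $\sup_{u\in K}\sup_{t\in[n,n+1]}|M_t(u)/t|\to0$ almost surely, which is precisely $\lim_{t\to\infty}\sup_{u\in K}M_t(u)/t=0$ almost surely. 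Exhausting $\Rd$ by the balls $\overline{B}_R(0)$, $R\in\N$, and intersecting the exceptional sets gives a single null set off which the convergence holds uniformly on every compact; in particular the martingale‑part null set is $u$-independent, as announced in the remark preceding the lemma.

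The only genuinely delicate point is the choice of the moment order $m$: it must simultaneously satisfy $2\delta m>d$, keep $\delta m$ inside the range in which Lemma~\ref{Verschiebungskompakt} (equivalently, the dissipativity estimate) holds — a range pinned down by $\alpha$, $B$ and the parameter $q$ of \hyperlink{(A5)}{(A5)} — and be $\ge2$ for the Borel--Cantelli step. The existence of such an $m$ is forced by the standing requirements $q>\max\{d,\delta\}$ and $2\alpha-B^2(4q-1)>0$, which together make $\alpha/B^2$ large relative to $d$ and $1/\delta$; carrying out this compatibility check, and verifying that all the constants above really are uniform in the window index $n$ (which rests on boundedness of $\dive b_k$ and on the $s$-uniformity in Lemma~\ref{Verschiebungskompakt}), is the bulk of the bookkeeping.
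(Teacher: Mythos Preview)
Your proposal is correct and follows essentially the same strategy as the paper: BDG on $M_t(u)-M_t(v)$, the $\delta$-H\"older bound on $\dive b_k$ from \hyperlink{(A2)}{(A2)}, the dissipativity estimate of Lemma~\ref{Verschiebungskompakt} for a time-uniform moment bound on $|x(u,t)-x(v,t)|$, a Kolmogorov--Chentsov step (the paper cites Kunita's Lemmas~1.8.1--1.8.2) to pass to $\sup_{u\in K}$, and then Borel--Cantelli over integer time windows. The paper streamlines your argument by taking $\sup_{0\le t\le T}$ directly inside the BDG inequality---so no separate treatment of $M_n$ and the increments $M_t-M_n$ on $[n,n+1]$ is needed---and works from the outset with the specific moment exponent $2q/\delta$ tied to \hyperlink{(A5)}{(A5)} rather than introducing an auxiliary parameter $m$ and checking its compatibility afterwards.
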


\begin{beweis} Consider w.l.o.g $K=[0,1]^d$.
     Let $0<\delta\le 1$ be the Hölder constant from \hyperlink{(A2)}{(A2)}. Then for all $T>0$  we have by the Burkholder-Davis-Gundy inequality 
     \begin{align}\label{absch1}
     \begin{split}
         &\E\left(\left(\frac{\sup_{0\le t\le T }\abs{M_t(u)-M_t(v)}}{T}\right)^{\frac{2q}{\delta}}\right)
         \\\le& \frac{1}{T^{\frac{p}{\delta}}} \E\left(\sumkinfty\sum_{p=1}^d \left(\int_0^T \left(\dive(b^{\cdot,p}_k)(x(u,s),\mu_s)-\dive(b^{\cdot,p}_k)(x(v,s),\mu_s)\right)^2\dd s\right)^{\frac{q}{\delta}} \right) \\
         &\le C\frac{T^{\frac{q}{\delta}-1}}{T^{\frac{2q}{\delta}}} \int_0^T \E(\abs{x(u,s)-x(v,s)}^{2q}) \dd s. 
         \end{split}
     \end{align}

     To estimate the last term, note that Lemma \ref{Verschiebungskompakt}
    
       implies 
     \begin{align*}
         \E(\abs{x(u,t)-x(v,t)}^{2q})\le \abs{u-v}^{2q} 
     \end{align*}
    by the Hölder inequality.
     Hence we can estimate the last term in \eqref{absch1} by 
     \begin{align*}
         \frac{C}{T^{\frac{q}{\delta}}} \abs{u-v}^2q 
     \end{align*}
     for all $u,v\in [0,1]^d$.Thus from Lemma 1.8.1 and Lemma 1.8.2 in \cite{kunita2019stochastic}, we know that 
     \begin{align*}
         \E\left(\sup_{u\in [0,1]^d}\left(\frac{\sup_{0\le t\le T}\abs{M_t(u)-M_t(0)}}{T}\right)^{\frac{2q}{\delta}}\right)\le C\E(L^{\frac{2q}{\delta}}) \sup_{u\in [0,1]^d } \norm{u}^\beta_{\infty} \le C \frac{1}{T^{\frac{q}{\delta}}} 
     \end{align*}
     Here $L$ is the local Hölder constant coming from Lemma 1.8.1 \cite{kunita2019stochastic} and $\beta$ the corresponding Hölder exponent.
 From this the inequality can easily be extended to $[-n,n]^d$ for all $n\in \N$ by only changing the constant $C>0$, showing the claim. Furthermore we have for all $T>0$:
     \begin{align*}
         &\E(\left(\frac{\sup_{0\le t\le T} \abs{M_t(0)}}{T}\right)^{\frac{2q}{\delta}} )\\
         \le& C \frac{1}{T^{\frac{2q}{\delta}}}\E\left( \left(\int_0^T\sumkinfty\sum_{p=1}^d (\dive(b_k^{\cdot,p}(\cdot,\mu_t))(x(0,t)))^2 \dd t\right)^{\frac{q}{\delta}} \right)\\
         \le& C \frac{1}{T^{\frac{q}{\delta}}} 
     \end{align*}
     We will now show almost sure convergence. First of all we can assume without loss of generality that $0\in K$, since otherwise we can just enlarge $K$. 
     Let $\eps>0$ and consider for $n\in \N$ 
\begin{align*}
    A_n:=\left\{ \abs{\sup_{0\le t\le n}\sup_{u\in K} M_t(u)}>\eps n  \right\}
\end{align*}
then we can show
\begin{align*}
    \W(A_n)&\le (n\eps)^{-\frac{2q}{\delta}} \E\left((\sup_{u\in K} \sup_{0\le t\le n}\abs{M_t(u)})^{\frac{2q}{\delta}}\right)\\
&\le C (n\eps)^{-\frac{2q}{\delta}}\left(\E\left(\left(\sup_{u\in K }\sup_{0\le t \le n} \abs{M_t(u)-M_t(v)}\right)^{\frac{2q}{\delta}} \right)+ \E\left(\left(\sup_{0\le t\le n} \abs{M_t(0)}\right)^{\frac{2q}{\delta}} \right)\right) \\
    &\le  C \frac{1}{n^{\frac{q}{\delta}}}.
\end{align*}
By the choice of $q$, we obtain the summability of the right-hand side and thus $A_n$ can almost surely occur for only a finite number of $n\in N$, by the Borel-Cantelli lemma. Now for  $t\gg 1$ take $n\in \N$ such that $n-1\le t\le n$, then we get 
\begin{align*}
    \frac{\abs{\sup_{u\in K}M_t(u)}}{t}&= 
    \frac{\abs{\sup_{u\in K}M_t(u)}}{n} \frac{n}{t} \\
    &\le \frac{\abs{\sup_{u\in K}M_t(u)}}{n} \frac{n}{n-1} \le C\eps  \fastsicher
\end{align*}
where $C=\sup_{n\ge 2} \frac{n}{n-1}$. Choosing a sequence $\eps_n\searrow 0$, we obtain  
\begin{align*}
    \limsup_{t\to \infty} \frac{\abs{\sup_{u\in K} M_t(u)}}{t} = 0
\end{align*} almost surely, which yields the claim.
\end{beweis}
We will proceed, by treating the part of bounded variation
\begin{lemma}\label{BVArzela}
    Under the assumptions \hyperlink{(A1)}{(A1)}, \hyperlink{(A2)}{(A2)} and \hyperlink{(A5)}{(A5)}, for all compact $K\subset \Rd$ there exists a set $N$ such that $\W(N)=0$ and for all $\omega \in \Omega\setminus N$ there is some $T(\omega,K)>0$ such that 
    \begin{align*}
        \left(\frac{BV_t(\omega)}{t}\right)_{t\ge T(\omega,K)} 
    \end{align*}
    is relatively compact in $\ccc(K)$.  
\end{lemma}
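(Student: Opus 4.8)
The plan is to verify the hypotheses of the Arzel\`a--Ascoli theorem for the family $\{\,t\mapsto BV_t(\omega)/t : t\ge T(\omega,K)\,\}$ in $C(K)$, i.e.\ uniform boundedness and equicontinuity on an event of full probability. Uniform boundedness is immediate: the integrand defining $BV_t$, namely $u\mapsto \dive(a(\cdot,\mu_s))(x(u,s))-\frac12\sum_{k=0}^\infty\sum_{p=1}^d\tr\big((Db_k^{\cdot,p}(\cdot,\mu_s)(x(u,s)))^2\big)$, is bounded by a deterministic constant $C_0$: the divergence term lies between $-d\alpha$ (by Lemma \ref{Lyapneg}) and $Cd$ (since $\phi$ has bounded derivative by (A5)), while the trace term is bounded by $\tfrac12 dB^2$ exactly as in the proof of Lemma \ref{Lyapneg}, using (A2). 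Hence $\sup_{u\in\Rd}|BV_t(u)|/t\le C_0$ for every $t>0$, pathwise.

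For equicontinuity, Lemma \ref{HölderBV} reduces the problem to controlling $\frac1t\int_0^t\sup_{u,v\in K,\,|u-v|\le\rho}|x(u,s)-x(v,s)|^\delta\,ds$ uniformly for large $t$, and to showing that it vanishes as $\rho\to0$. The crucial ingredient is the claim that, on a $\W$-null set $N$ depending only on $K$, one has $\eta_K(s):=\sup_{u,v\in K}|x(u,s)-x(v,s)|\to0$ as $s\to\infty$. I would prove this along the lines of Lemma \ref{MartglmKonvfs}. The It\^o computation underlying Lemma \ref{Verschiebungskompakt} is in fact \emph{strict} under (A5) --- this is precisely where the bounded/unbounded dichotomy for $(b_k)$ and the numerical thresholds on $q,\alpha,B$ enter --- and yields $\E|x(u,s)-x(v,s)|^{2q}\le|u-v|^{2q}e^{-cs}$ for some $c>0$ and $q>\max\{d,\delta\}$; a Burkholder--Davis--Gundy plus Gr\"onwall estimate on each unit interval $[n,n+1]$, using the Lipschitz bounds on the coefficients, upgrades this to $\E\sup_{n\le s\le n+1}|x(u,s)-x(v,s)|^{2q}\le Ce^{-cn}|u-v|^{2q}$ with $C$ independent of $n$. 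Since $2q>2d=\dim(K\times K)$, applying Lemmas 1.8.1--1.8.2 of \cite{kunita2019stochastic} (the Kolmogorov-type bounds already used in Lemma \ref{MartglmKonvfs}) to the random field $(u,v)\mapsto\sup_{n\le s\le n+1}|x(u,s)-x(v,s)|$ on $K\times K$ gives $\E\big(\sup_{n\le s\le n+1}\eta_K(s)\big)^{2q}\le C_K e^{-cn}$, and Markov's inequality together with Borel--Cantelli (the bound is summable in $n$) yields $\eta_K(s)\to0$ a.s., with exceptional set depending only on $K$.

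Granting the claim, fix $\omega\notin N$. By Lemma \ref{HölderBV}, $\sup_{u,v\in K}|BV_t(u)-BV_t(v)|/t\le \frac Ct\int_0^t\eta_K(s,\omega)^\delta\,ds$, and the right-hand side tends to $0$ as $t\to\infty$ by Ces\`aro convergence, since $\eta_K(\cdot,\omega)\to0$ and $\eta_K(\cdot,\omega)^\delta$ is locally bounded (the flow is continuous on $K\times[0,s]$). Choose $T_1=T_1(\omega,K)$ so that this quantity is $<\varepsilon$ for $t\ge T_1$; on the remaining range $1\le t\le T_1$ the family is equicontinuous because $\sup_{1\le t\le T_1}\sup_{|u-v|\le\rho}|BV_t(u)-BV_t(v)|/t\le C\int_0^{T_1}\sup_{u,v\in K,\,|u-v|\le\rho}|x(u,s)-x(v,s)|^\delta\,ds$, which vanishes as $\rho\to0$ by dominated convergence (uniform continuity of $x(\cdot,s)$ on $K$). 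Thus $\{BV_t(\omega)/t:t\ge1\}$ is bounded and equicontinuous in $C(K)$, hence relatively compact by Arzel\`a--Ascoli, and one may take $T(\omega,K)=1$. (The argument in fact shows that $BV_t(\omega)/t$ stays within $o(1)$ of the constant function with value $BV_t(u_0)(\omega)/t\in[-C_0,C_0]$, so every subsequential limit is constant on $K$.)

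The main obstacle is the uniform contraction claim $\sup_{u,v\in K}|x(u,s)-x(v,s)|\to0$ a.s.; once it is in hand the rest is routine. Its proof hinges on extracting the strict exponential decay hidden in Lemma \ref{Verschiebungskompakt} (whose statement records only the non-strict bound with constant $1$), on a time-modulus estimate over unit intervals with constants uniform in $n$, and on the Kolmogorov/Borel--Cantelli passage --- exactly the point where the precise conditions of (A5), including the separate bounded-$(b_k)$ case, are used.
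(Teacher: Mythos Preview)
Your argument is correct and takes a genuinely different route from the paper. The paper works pathwise: it applies It\^o's formula and Gr\"onwall to obtain the almost-sure bound
\[
|x(u,t)-x(v,t)|^2 \le e^{-(2\alpha-B^2)t}\bigl(|u-v|^2 + Y_t\bigr),
\]
where $Y_t$ is the running supremum over $K\times K$ of a certain martingale in $(u,v)$; it then proves $Y_t/t\to 0$ a.s.\ by a BDG/Kolmogorov/Borel--Cantelli argument on the increments of that martingale (this is where the Cauchy--Schwarz split forces the $4q$ versus $2q$ dichotomy in (A5)), and finally plugs the pathwise bound into Lemma~\ref{HölderBV} to get equicontinuity directly in the form $\bigl((|u-v|^2+Y_t)/t\bigr)^{\delta/2}$.

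You instead extract the strict exponential decay $\E|x(u,s)-x(v,s)|^{2q}\le e^{-cs}|u-v|^{2q}$ hidden in the proof of Lemma~\ref{Verschiebungskompakt}, lift it to $\sup_{n\le s\le n+1}$ via a unit-interval BDG/Gr\"onwall step, run Kolmogorov on $K\times K$, and conclude $\eta_K(s)\to 0$ a.s.\ by Borel--Cantelli. This is more in the spirit of a standard moment-based Kolmogorov argument and delivers the uniform contraction $\sup_{u,v\in K}|x(u,s)-x(v,s)|\to 0$ immediately --- a fact the paper only recovers later, in the proof of Lemma~\ref{Lyapexpglmkonvfs}, as a consequence of the pathwise bound. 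Your route also never needs the $4q$ condition: the exponential moment decay uses only $2\alpha-B^2(2q-1)>0$, which is implied by either branch of (A5). Conversely, the paper's pathwise inequality gives an explicit structural bound on $|x(u,t)-x(v,t)|$ that your approach does not produce, and which is reused verbatim in Lemma~\ref{Lyapexpglmkonvfs}.
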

\begin{beweis}
 Observe that by Itô's formula, we have for all $u,v\in \Rd$ 
    \begin{align*}
        \abs{x(u,t)-x(v,t)}^{2} &= \abs{u-v}^{2} \\
        +& 2\int_0^t \skalar{x(u,s)-x(v,s), \dd x(u,s)-x(v,s)} \\
        & 2\int_0^t \sumkinfty \norm{b_k(u,\mu_s)-b_k(v,\mu_s)}_{HS}^{2}\dd \skalarq{x(u,\cdot)-x(v,\cdot)}_s \\
        &= \abs{u-v}^{2} \\
        &+ 2\int_0^t \int_{\Rd} \skalar{x(u,s)-x(v,s), \phi(x(u,s)-x(r,s))-\phi(x(v,s)-x(r,s))  } \mu(\dd r) \dd s \\
        &+ \sumkinfty \sum_{p=1}^d2\int_0^t \skalar{x(u,s)-x(v,s),b_k^{\cdot,p}(x(u,s),\mu_s)-b^{\cdot,p}_k(x(v,s),\mu_s)} \dd B_k^p(s) \\
        &+\sumkinfty\sum^d_{p=1}\int_0^t \norm{b_k(x(u,s),\mu_s)-b_k(x(v,s),\mu_s)}_{HS}^{2} \dd s.
        \\ &\le \abs{u-v}^2 + (-2\alpha - B^2) \int_0^t \abs{x(u,s)-x(v,s)}^2 \dd s \\& +   \sum_{p=1}^d \sumkinfty 2\int_0^t \skalar{x(u,s)-x(v,s),b_k^{\cdot,p}(x(u,s),\mu_s)-b_k^{\cdot,p}(x(v,s),\mu_s)} \dd B_k^p(s)   \fastsicher 
    \end{align*}
    Note that this holds for all $u,v\in \Rd$ almost surely, hence it holds for all $\omega\in \Omega\setminus N_1$ where $N_1$ is the null set on which all the terms above are discontinuous in $(t,u)\in \R_+\times \Rd$. 
Grönwall's Lemma yields us for all 
\begin{align}\begin{split}\label{Groenwall}
 \abs{x(u,t)-x(v,t)}^2 &\le \exp(-(2\alpha-B^2)t) \times\Big( \abs{u-v}^2 \\&+ 2\sup_{0\le s\le t} \sup_{u,v\in K} \sumkinfty \sum^d_{p=1}\int_0^s \skalar{x(u,r)-x(v,r),b^{\cdot,p}_k(x(u,r),\mu_r)-b_k^{\cdot,p}(x(v,r),\mu_r)} \dd B_k^p(r)\Big). 
 \end{split}
\end{align}
almost surely.
Now set 
\begin{align*}
    Y_t =\sup_{u,v\in K}2\sup_{0\le s\le t}  \sumkinfty \sum^d_{p=1}\int_0^s \skalar{x(u,r)-x(v,r),b^{\cdot,p}_k(x(u,s),\mu_s)-b_k^{\cdot,p}(x(v,s),\mu_s)} \dd B_k^p(r)
\end{align*}
We will show in a similar way, as in Lemma \ref{MartglmKonvfs}, that
\begin{align}\label{f.s.Konv}
\lim_{t\to \infty}\frac{Y_t}{t}= 0 \fastsicher
\end{align}
To see this, consider 
\begin{align*}
    Y_t =: \sup_{u,v\in K} \sup_{0\le s \le t} X_s(u,v)
\end{align*}
we will show 
\begin{align*}
    \E\left(\left(\sup_{0\le r \le t} \abs{X_r(u_1,u_2)-X_r(v_1,v_2)}\right)^{2q}\right)\le C(K,b,d)t^{q}\abs{u-v}^{2q}
\end{align*}
where $u=(u_1,u_2),v=(v_1,v_2)\in K^2$, for all $q\ge 1$. W.l.o.g consider $K=[0,1]^d$. Observe first 
\begin{align}\label{SPlokLip}
    \abs{\skalar{x,y}-\skalar{v,w}}\le \abs{\skalar{x-v,y}}+ \abs{v,y-w}\le \abs{y} \abs{x-v} + \abs{v}\abs{y-w} .
\end{align}
where here $\skalar{\cdot,\cdot}$ denotes the inner product on $\Rd$.
From the Burkholder-Davis-Gundy inequality we now get 
 \begin{align*}
       & \E\left(\left(\sup_{0\le r \le t} \abs{X_r(u_1,u_2)-X_r(v_1,v_2)} \right)^{2q}\right) \\
       & \le\E\Bigg(\bigg(2\sup_{0\le r\le t}\Big\vert\sum^d_{p=1}\sumkinfty\int_0^r \skalar{x(u_1,s)-x(u_2,s),b_k^{\cdot,p}(x(u_1,s),\mu_s)-b^{\cdot,p}_k(x(u_2,s),\mu_s)}\\
       &-\skalar{x(v_1,s)-x(v_2,s),b_k^{\cdot,p}(x(v_1),\mu_s)-b_k^{\cdot,p}(x(v_2,s),\mu_s)}\dd B_k^p(s)\Big\vert   \bigg)^{2q} \Bigg)\\
       &\le \E\Bigg(2\Bigg(\int_0^t \Big(\sumkinfty \sum_{p=1}^d\skalar{x(u_1,s)-x(u_2,s),b_k^{\cdot,p}(x(u_1,s),\mu_s)-b^{\cdot,p}_k(x(u_2,s),\mu_s)}\\
       &-\skalar{x(v_1,s)-x(v_2,s),b_k^{\cdot,p}(x(v_1),\mu_s)-b_k^{\cdot,p}(x(v_2,s),\mu_s)}\Big)^2\dd s   \Bigg)^q \Bigg)\\
       &\le C(d,q) t^{q-1} \E\Bigg(\bigg(\int_0^t \big(\sumkinfty\sum_{p=1}^d\skalar{x(u_1,s)-x(u_2,s),b_k^{\cdot,p}(x(u_1,s),\mu_s)-b_k^{\cdot,p}(x(u_2,s),\mu_s)}\\
       &-\skalar{x(v_1,s)-x(v_2,s),b^{\cdot,p}(x(v_1),\mu_s)-b^{\cdot,p}(x(v_2,s),\mu_s)}^2\big)^{q}\dd s   \bigg) \Bigg)\\
       &\overset{\eqref{SPlokLip}}{\le} C(d,q) t^{q-1} \E\Bigg(\bigg(\int_0^t \abs{x(u_1,s)-x(v_1,s)-(x(u_2,s)-x(v_2,s))}^{2q}\\
      & \times\big(\sum^d_{p=1}\sumkinfty\abs{b_k^{\cdot,p}(x(u_1,s),\mu_s)-b_k^{\cdot,p}(x(v_1,s),\mu_s)}\big)^{2q}\\
       &+\abs{x(u_2,s)-x(v_2,s)}^{2q}\\
       &\times\big(\sum_{p=1}^d\sumkinfty\abs{b_k^{\cdot,p}(x(u_1,s),\mu_s)-b_k^{\cdot,p}(x(v_2,s),\mu_s)-(b^{\cdot,p}_k(x(u_1,s),\mu_s)-b^{\cdot,p}_k(x(v_1,s),\mu_s))}^2\big)^{q}\dd s   \bigg) \Bigg)\\
       \overset{\text{Cauchy-Schwarz},\hyperlink{(A1)}{(A1)}}&{\le} C(d,q) t^{q-1} \int_0^t \Bigg(\Big(\E(\abs{x(u_1,s)-x(v_1,s)}^{4q}) +\E(\abs{x(u_2,s)-x(v_2,s)}^{4q})\Big) \\
       &\times \E(\abs{x(u_1,s)-x(v_1,s)}^{4q}) \Bigg)^{\frac{1}{2}} \\
       &+ \Bigg(\E(\abs{x(u_2,s)-x(v_2)}^{4q})\\
       &\times \Big(\E(\abs{x(u_1,s)-x(v_1,s)}^{4q})+\E(\abs{x(u_2,s)-x(v_2,s)}^{4q})\Big)\Bigg)^{\frac{1}{2}} \dd s \\
       \overset{\text{Lemma} \ref{Verschiebungskompakt}}&{\le} C(d,q) t^{q-1} \\
       &\times \int_0^t \Big(\bigg(\abs{u_1-v_1}^{4q} +\abs{u_2-v_2}^{4q}\bigg)\abs{u_1-v_1}^{4q}\Big)^{\frac{1}{2}} \\
       &+ \Big(\bigg(\abs{u_1-v_1}^{4q} +\abs{u_2-v_2}^{4q}\bigg)\abs{u_2-v_2}^{2q}\Big)^{\frac{1}{2}} \dd s \\
       &\le C(d,q,K) t^{q} (\abs{u_1-v_1}^{2q}+\abs{u_2-v_2}^{2q})\\
       &\le C(d,q,K) t^{} \abs{(u_1,u_2)-(v_1,v_2)}^{2q}
    \end{align*}
    for all $u_i,v_i\in K$ where $i=1,2$ and $u=(u_1,u_2),v=(v_1,v_2)$. Observe that in the case, where $(b_k)_{k\ge 0}$ is bounded, we can use the Hölder inequality for the exponents $1$ and $\infty$, instead of the Cauchy-Schwarz inequality, to obtain the same result. Note that the constants appearing may change with each inequality.  Since $[0,1]^{2d}= [0,1]^d \times [0,1]^d$ we can consider  
    \begin{align*}
        Z_t(u)= X_t(u_1,u_2)
    \end{align*}
    and we get 
    \begin{align*}
        \E\left((\sup_{0\le s \le t} \abs{Z_s(u)-Z_s(v)})^{2q}\right) \le C(d,q,K)t^q \abs{u-v}^{2q}
    \end{align*}
    Following the proof of Theorem 1.8.1 in \cite{kunita2019stochastic}, we get that there exists a version, which we will also denote by $Z_t(u)$ with $t\ge 0$ and $u\in [0,1]^{2d}$ such that 
    \begin{align*}
        \E\left((\sup_{u\in [0,1]^{2d}} \sup_{0\le t\le T} \abs{Z_t(u)})^{2q} \right)&=\E\left((\sup_{u\in [0,1]^{2d}} \sup_{0\le t\le T} \abs{Z_t(u)-Z_t(0)})^{2q} \right)\\
        &\le C \E\left(\left(\sumkinfty \omega^\beta_k(Z)\right)^{2q} \norm{u}^{\beta q}\right)\\
        &\le C(d,q,K)T^{2q}  
    \end{align*}
    for all $T\ge 0$ and some $0<\beta\le 1$. The last inequality follows from Lemma 1.8.2 in \cite{kunita2019stochastic} by observing  
    \begin{align*}
        \omega_k(Z) &= \max_{u,v\in \Delta_k: \norm{u-v}_\infty=2^{-k}} \sup_{0\le t\le T} \abs{Z_t(u)-Z_t(v)}\\
        \omega^\beta_k(Z) &= 2^{\beta k} \omega_k(Z)
    \end{align*}
     and $\Delta_k$ is defined as the set of all dyadic rationals of length $n$ on p.41 \cite{kunita2019stochastic}. Hence, by choosing $q$ large enough, one can show in exactly the same way as in Lemma \ref{MartglmKonvfs}
     \begin{align*}
         \lim_{t\to \infty } \frac{Y_t}{t} = 0 \fastsicher
     \end{align*}
     by the choice of $q$.
     Now choose $N_2$ as the set on which the convergence above fails. 
     
     From Lemma \ref{HölderBV} we get 
     \begin{align*}
         \frac{\abs{BV_t(u)-BV_t(v)}}{t}&\le \frac{\int_0^t \abs{x(u,s)-x(v,s)}^\delta \dd s}{t}\\
         &\le \left(\frac{\int_0^t \abs{x(u,s)-x(v,s)}^2 \dd s}{t}\right)^{\frac{\delta}{2}}  \\
         \overset{\eqref{Groenwall}}&{\le}\left(\frac{\int_0^t \exp(-(2\alpha-B^2)s)(\abs{u-v}^2+ Y_t) \dd s}{t}\right)^{\frac{\delta}{2}}
         \\&=  \left(\frac{(\abs{u-v}^2+ Y_t) (1-\exp(-(2\alpha-B^2t)))}{t}\right)^{\frac{\delta}{2}}\\
         &\le \left(\frac{(\abs{u-v}^2+ Y_t) }{t}\right)^{\frac{\delta}{2}} \fastsicher
     \end{align*}
  
    Hence for all $\omega \in \Omega\setminus (N_1\cup N_2)$ there exists a $T(\omega)>0$ such that for all $t\ge T(\omega)$ and $\eps >0$ there exists a $\gamma>0$ such that for all $u,v\in K$ with 
    \begin{align*}
        \abs{u-v}<\gamma \Rightarrow \frac{\abs{BV_{t}(u)(\omega) -BV_{t}(v)(\omega)}}{t} < \eps.
    \end{align*}
Now the Arzelá-Ascoli theorem yields the claim, since by Assumption \hyperlink{(A2)}{(A2)} $\left(\frac{BV_t}{t}\right)_{t\ge 1}$ is bounded. 
 \end{beweis}
Our initial goal was to ensure existence of Lyapunov exponents i.e. show that the limit of $f_t(u)$ exists, as $t\to\infty$.  Furthermore, since the Lyapunov exponents occur under an integral sign, we need this to be uniform in some sense. In the previous Lemmas we have already shown good compactness properties of $f_t(u)$, now we will identify the limit under certain conditions. 
\begin{lemma}\label{Lyapexpglmkonvfs}
    Assume \hyperlink{(A1)}{(A1)}, \hyperlink{(A2)}{(A2)} and \hyperlink{(A5)}{(A5)}. If furthermore for all $u\in \Rd$ 
    \begin{align*}
        \sumkinfty\sum_{p=1}^d \tr\left(\left(Db^{\cdot,p}_k(u,\delta_u)\right)^2\right) = L
    \end{align*}
    holds for some $L\in \R$, and the function
    \begin{align*}
          \sumkinfty\sum_{p=1}^d \tr\left(\left(Db^{\cdot,p}_k(\cdot,\cdot)\right)^2\right)
    \end{align*}
    is jointly continuous on $\Rd \times \M$. Then there exists $\lambda<0$ such that 
    \begin{align*}
        \lim_{t\to \infty} \sup_{u\in K} \abs{f_t(u)-\lambda} =0 \fastsicher
    \end{align*}
    for all compact $K\subset \Rd$. 
\end{lemma}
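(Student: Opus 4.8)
The plan is to turn the uniform statement into a pointwise one, using the relative compactness already in hand. Combining Lemma~\ref{BVArzela} with Lemma~\ref{MartglmKonvfs}, for $\W$-a.e.\ $\omega$ the family $\big(f_t\big)_{t\ge T(\omega,K)}=\big(\tfrac{BV_t}{t}+\tfrac{M_t}{t}\big)_{t\ge T(\omega,K)}$ is relatively compact in $\ccc(K)$: the bounded-variation part is relatively compact by Lemma~\ref{BVArzela}, the martingale part converges to $0$ uniformly on $K$ by Lemma~\ref{MartglmKonvfs}, and adding a uniformly convergent family preserves relative compactness (and leaves the subsequential limits unchanged). Hence it suffices to show that, for every $u$ in a fixed countable dense subset $D\subset\Rd$, one has $f_t(u)\to\lambda$ almost surely as $t\to\infty$, with the constant
\[
\lambda:=\tr\big(D\phi(0)\big)-\tfrac12 L=\dive(\phi(\cdot))(0)-\tfrac12 L .
\]
Indeed, any subsequential limit $g\in\ccc(K)$ of $(f_t)$ then agrees with $\lambda$ on $D$, hence with $\lambda$ on all of $K$ by continuity, so $\sup_{u\in K}\abs{f_t(u)-\lambda}\to0$; the countably many exceptional null sets may be collected into one. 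Moreover $\lambda<0$: the computations inside the proof of Lemma~\ref{Lyapneg} give $\dive(\phi(\cdot))(0)\le-d\alpha$ and $\big\vert\sumkinfty\sum_{p=1}^d\tr\big((Db_k^{\cdot,p}(u,\mu))^2\big)\big\vert\le dB^2$, hence $\abs{L}\le dB^2$ and $\lambda\le\tfrac d2(B^2-2\alpha)<0$ by Assumption~\hyperlink{(A5)}{(A5)}.

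Now fix $u\in D$ and split $f_t(u)=\tfrac{M_t(u)}{t}+\tfrac{BV_t(u)}{t}$; the first term tends to $0$ a.s.\ by Lemma~\ref{MartglmKonvfs}. For the second, write $\tfrac{BV_t(u)}{t}=\tfrac1t\int_0^t h_s(u)\,\dd s$ with
\[
h_s(u):=\dive(a(\cdot,\mu_s))(x(u,s))-\tfrac12\sumkinfty\sum_{p=1}^d\tr\big((Db_k^{\cdot,p}(\cdot,\mu_s)(x(u,s)))^2\big).
\]
Since $h_s(u)$ is bounded (by boundedness of $D\phi$ and Assumption~\hyperlink{(A2)}{(A2)}), it is enough to prove $h_s(u)\to\lambda$ a.s.\ as $s\to\infty$; Cesàro averaging then gives $\tfrac{BV_t(u)}{t}\to\lambda$. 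Using $a(w,\mu)=\int_{\Rd}\phi(w-v)\,\mu(\dd v)$ we get $\dive(a(\cdot,\mu_s))(x(u,s))=\int_{\Rd}\tr\big(D\phi(x(u,s)-x(r,s))\big)\,\mu_0(\dd r)$; by the Remark after Assumption~\hyperlink{(A5)}{(A5)} and Fubini's theorem, for a.e.\ $\omega$ one has $x(u,s)-x(r,s)\to0$ for $\mu_0$-a.e.\ $r$, so dominated convergence (recall $D\phi$ is bounded and continuous) yields $\dive(a(\cdot,\mu_s))(x(u,s))\to\tr(D\phi(0))$. Applying the same reasoning to the bounded continuous integrand $v\mapsto\tfrac{\abs{v-x(u,s)}}{1+\abs{v-x(u,s)}}$ gives $\gamma(\mu_s,\delta_{x(u,s)})=\int_{\Rd}\tfrac{\abs{x(r,s)-x(u,s)}}{1+\abs{x(r,s)-x(u,s)}}\,\mu_0(\dd r)\to0$ a.s., i.e.\ $\mu_s$ concentrates at $x(u,s)$. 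Feeding this into the assumed joint continuity of $(w,\mu)\mapsto\sumkinfty\sum_{p=1}^d\tr\big((Db_k^{\cdot,p}(w,\mu))^2\big)$, together with the hypothesis that this function equals $L$ along the diagonal $\{(w,\delta_w)\}$, yields $\sumkinfty\sum_{p=1}^d\tr\big((Db_k^{\cdot,p}(\cdot,\mu_s)(x(u,s)))^2\big)\to L$. Hence $h_s(u)\to\tr(D\phi(0))-\tfrac12 L=\lambda$, finishing the argument.

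The step I expect to be the main obstacle is the last convergence, $\sumkinfty\sum_{p=1}^d\tr\big((Db_k^{\cdot,p}(\cdot,\mu_s)(x(u,s)))^2\big)\to L$. Its measure ingredient is under control, since $\gamma(\mu_s,\delta_{x(u,s)})\to0$, but the spatial point $x(u,s)$ need not remain in a fixed compact subset of $\Rd$, so one cannot merely invoke uniform continuity of the trace functional on a compact set; the passage from joint continuity to convergence along the random path $(x(u,s),\mu_s)$ is the delicate point, and it is precisely where the full strength of the joint-continuity hypothesis of the lemma is used. In the situations the lemma is aimed at — e.g.\ Example~\ref{BspIntermittenz}, where $\phi(0)=0$ and the diffusion coefficients vanish on the diagonal, so that the empirical barycentre of $\mu_t$ is conserved and all trajectories stay bounded — this reduces to uniform continuity of the trace functional on a compact, $\gamma$-tight (hence $\gamma$-compact) neighbourhood of the diagonal, and the remaining steps (Cesàro averaging, the countable union of null sets, and the inequality $\lambda<0$) are routine.
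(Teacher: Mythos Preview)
Your argument follows the paper's route exactly: establish $h_s(u)\to\lambda$ pointwise via $\abs{x(u,s)-x(v,s)}\to 0$ and $\gamma(\mu_s,\delta_{x(u,s)})\to 0$, Ces\`aro-average (the paper writes ``de l'H\^ospital'') to obtain $BV_t(u)/t\to\lambda$, kill the martingale part with Lemma~\ref{MartglmKonvfs}, and upgrade to uniform convergence on $K$ via the relative compactness of Lemma~\ref{BVArzela} (the paper phrases this last step as ``a simple subsequence argument'' rather than your passage through a countable dense set, but the content is the same). The trace-term convergence you single out as delicate is handled by the paper in precisely the way you propose --- a bare appeal to the assumed joint continuity, comparing $(x(u,t),\mu_t)$ with $(x(0,t),\delta_{x(0,t)})$ --- so your caution is well placed, but the paper offers no additional justification at that point either.
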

\begin{beweis}
    We will first show that for all $u,v\in \Rd$
    \begin{align*}
        \lim_{t\to \infty}\abs{x(u,t)-x(v,t)} = 0 \fastsicher
    \end{align*}
    holds. Keeping the notation of Lemma \ref{BVArzela} we 
 we obtain for all $u,v\in K$, due to \text{Lemma} \ref{Verschiebungskompakt} 
    \begin{align*}
        \lim_{t\to\infty}\abs{x(u,t)-x(v,t)}^2\le   \lim_{t\to\infty}\exp(-(2\alpha-B^2)t) (\abs{u-v}^2 + Y^K_t)  = 0\fastsicher.
    \end{align*}
    where $K\subset\Rd$ is compact. This follows from the proof of Lemma \ref{BVArzela}, where we have shown
    \begin{align*}
        \lim_{t\to \infty}\frac{Y_t}{t} = 0 \fastsicher. 
    \end{align*}
    Since we can exhaust $\Rd$ with countably many  compacts $(K_n)_{n\in \N}$ we get the result for all $u,v\in \Rd$. From this we get for all $v\in \Rd$ 
    \begin{align*}
        \lim_{t\to \infty}\gamma(\mu_t,\delta_{x(v,t)}) = 0 \fastsicher.
    \end{align*}
    Since
    \begin{align*}
      \lim_{t\to \infty} \gamma(\mu_t,\delta_{x(v,t)})\le\lim_{t\to \infty} \int_{\Rd} \frac{\abs{x(u,t)-x(v,t)}}{1+\abs{x(u,t)-x(v,t)}}\mu_0(\dd u) =0 \fastsicher.
    \end{align*}

Thus we get $u\in \Rd$
\begin{align*}
    \lim_{t\to \infty} \int_{\Rd} \dive(\phi(\cdot))(x(u,t)-x(v,t)) \mu_0(\dd u) = \dive(\phi(\cdot))(0) \fastsicher
\end{align*}
and 
\begin{align*}
    &\lim_{t\to \infty} \abs{\sumkinfty\sum_{p=1}^d \tr\left(\left(Db^{\cdot,p}_k(x(u,t),\mu_t)\right)^2\right)-L}\\
    &= \lim_{t\to \infty} \abs{\sumkinfty\sum_{p=1}^d \tr\left(\left(Db^{\cdot,p}_k(x(u,t),\mu_t)\right)^2\right)-\sumkinfty\sum_{p=1}^d \tr\left(\left(Db^{\cdot,p}_k(x(0,t),\delta_{x(0,t)})\right)^2\right)} = 0 \fastsicher
\end{align*}
Hence we can obtain by the rules of de l'Hôspital 
\begin{align*}
    \lim_{t\to \infty} \frac{BV_t(u)}{t} &= \lim_{t\to\infty} \int_{\Rd} \dive(\phi(\cdot))(x(u,t)-x(v,t)) \mu_0(\dd v) \\
    &- \frac{1}{2}\lim_{t\to\infty} \tr\left(\left(Db^{\cdot,p}_k(x(u,t),\mu_t)\right)^2\right)\\
    &= \dive(\phi(0)) -\frac{1}{2} L \fastsicher
\end{align*}
for the martingale part we obtained in Lemma \ref{MartglmKonvfs}
\begin{align*}
    \lim_{t\to \infty} \sup_{u\in K} \frac{M_t(u)}{t} = 0 \fastsicher.
\end{align*}
Hence, due to Lemma \ref{BVArzela}, by a simple subsequence argument we get 
\begin{align*}
    \lim_{t\to \infty} \sup_{u\in K} \abs{f_t(u)- \dive(\phi(0))+\frac{1}{2}L} = 0\fastsicher
 \end{align*}
 for all compact $K\subset\Rd $. The negativity of the limit, follows from Lemma \ref{Lyapneg}
\end{beweis}
\begin{bemerkung}
    The assumption for $\sum_{k=0}^\infty \sum_{p=1}^d\tr\left( \left(Db_k^{\cdot,p}(\cdot,\cdot)\right)^2 \right)$ is for example fulfilled when $b_k\equiv 0$ for all $k\ge 1$ and 
    \begin{align*}
        b_0(u,\mu) = \int_{\Rd}\dots\int_{\Rd}  \beta(u-v_1,\dots,u-v_n) \mu(\dd v_1),\dots, \mu(\dd v_n) 
    \end{align*}
    for $n\in \N$ and a differentiable function $\beta$ with bounded and $\delta$-Hölder derivative. 
\end{bemerkung}
\begin{bemerkung}
    The main difficulty here, is to guarantee the almost sure existence of Lyapunov exponents, pointwise. Whenever this achieved, one can conclude the result of Lemma \ref{Lyapexpglmkonvfs} holds.
\end{bemerkung}
With this Lemma at hand we can finally prove the main result.

\begin{theorem}
Under the assumptions \hyperlink{(A1)}{(A1)},\hyperlink{(A2)}{(A2)},\hyperlink{(A3)}{(A3)},\hyperlink{(A4)}{(A4)} and \hyperlink{(A5)}{(A5)} the random fields  $(p_t)_{t\ge 0}$ are intermittent if $p_0$ has compact support. 
\end{theorem}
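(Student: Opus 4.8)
The plan is to reduce the large-time behaviour of the $\lp{p}$-moments of $p_t$ to the uniform Lyapunov asymptotics of $\det(Dx(\cdot,t))$ established in Lemma \ref{Lyapexpglmkonvfs}. First I would fix a compact $K\subset\Rd$ with $\supp(p_0)\subset K$ and combine the diffeomorphism property (Theorem \ref{Thdiffeo}) with the explicit density $p_t(u)=p_0(x^{-1}(u,t))\abs{\det(Dx^{-1}(u,t))}$: the substitution $u=x(v,t)$, together with $\det(Dx^{-1}(x(v,t),t))=\det(Dx(v,t))^{-1}$ and the positivity of $\det(Dx(v,t))$ from Theorem \ref{Liouville}, turns the moment into
\begin{align*}
    \int_{\Rd}p_t(u)^p\,\dd u &=\int_K p_0(v)^p\,\det(Dx(v,t))^{-(p-1)}\,\dd v \\
    &=\int_K p_0(v)^p\exp\!\big(-(p-1)\,t\,f_t(v)\big)\,\dd v ,
\end{align*}
where $f_t(v)=\tfrac1t\ln\det(Dx(v,t))$ as in the notation preceding Lemma \ref{HölderBV}; the right-hand side is finite and strictly positive because $p_0\in\lp{p}(\Rd)$ by \hyperlink{(A4)}{(A4)} is a probability density.

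Next I would invoke Lemma \ref{Lyapexpglmkonvfs}, which produces a deterministic $\lambda<0$ with $\eps_t:=\sup_{v\in K}\abs{f_t(v)-\lambda}\to 0$ almost surely. Pulling the $v$-free factor $e^{-(p-1)t\lambda}$ out of the integral, the identity above is then squeezed, for $p>1$, between $e^{-(p-1)t\lambda}e^{-(p-1)t\eps_t}\norm{p_0}_{\lp{p}}^p$ and $e^{-(p-1)t\lambda}e^{(p-1)t\eps_t}\norm{p_0}_{\lp{p}}^p$. Applying $\tfrac1t\ln(\cdot)$ and letting $t\to\infty$ (using $\tfrac1t\ln\norm{p_0}_{\lp{p}}^p\to 0$ and $(p-1)\eps_t\to 0$), this sandwich forces the limit in Definition \ref{DefIntermittency} to exist with
\begin{align*}
    \lambda_p=\lim_{t\to\infty}\frac{\ln\int_{\Rd}p_t(u)^p\,\dd u}{t}=-(p-1)\lambda ,
\end{align*}
and for $p=1$ the integral equals $\mu_t(\Rd)=1$, so $\lambda_1=0$ is covered by the same formula.

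To conclude I would just note $\lambda_p/p=-\lambda\,(1-1/p)$; since $-\lambda>0$ and $p\mapsto 1-1/p$ is strictly increasing on $[1,\infty)$, the family $(\lambda_p/p)_{p\ge 1}$ is strictly increasing, which is exactly intermittency in the sense of Definition \ref{DefIntermittency}. I expect no real difficulty in this final Laplace-type estimate --- the compact support of $p_0$ is precisely what makes the passage from the uniform convergence of $f_t$ on $K$ to control of the whole integral harmless. The genuine obstacle sits entirely upstream, in Lemma \ref{Lyapexpglmkonvfs} and the lemmas it rests on: upgrading the negative $\limsup$ of Lemma \ref{Lyapneg} to an honest pointwise limit, identifying that limit as a deterministic constant independent of the starting point (via the coalescence $\abs{x(u,t)-x(v,t)}\to 0$ supplied by \hyperlink{(A5)}{(A5)}), and making the convergence uniform on compacts through the Arzelà--Ascoli/Kolmogorov-type estimates of Lemmas \ref{BVArzela} and \ref{MartglmKonvfs}. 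If one only had a starting-point-dependent exponent $\lambda(v)$, the same scheme still goes through with $\lambda_p=-(p-1)\inf_{v\in K}\lambda(v)$, replacing the crude sandwich by a genuine steepest-descent argument, provided this infimum is negative.
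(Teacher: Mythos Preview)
Your proposal is correct and follows essentially the same route as the paper's own proof: both reduce the $\lp{p}$-moment via the change of variables $u=x(v,t)$ to $\int_K p_0(v)^p\det(Dx(v,t))^{-(p-1)}\,\dd v$, then squeeze this integral using the uniform convergence $\sup_{v\in K}|f_t(v)-\lambda|\to 0$ from Lemma \ref{Lyapexpglmkonvfs}, arriving at $\lambda_p=-(p-1)\lambda$ and hence strict monotonicity of $\lambda_p/p$. Your formulation with $\eps_t\to 0$ is in fact slightly tidier than the paper's $\eps$--$T(\omega,\eps,K)$ phrasing, and your explicit remarks on the case $p=1$ and on what would happen for a $v$-dependent exponent $\lambda(v)$ go a bit beyond what the paper records, but the argument is the same.
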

\begin{beweis}
    We will denote $\overline{\supp(p_0)}=K$. First note that due to Lemma \ref{Lyapexpglmkonvfs}, fix the null set $N$ coming from Lemma \ref{Lyapexpglmkonvfs} and fix $\omega \in \Omega\setminus N $. Now there exists for all $\eps>0$ and all $u\in K$ a $T(\omega,\eps,K)>0$ such that for all $t\ge T(\omega,\eps,K)$  
    \begin{align*}
        &-\eps +\lambda\le f_t(u)(\omega) \le \eps +\lambda \\
        \Rightarrow& \exp\left(-\eps +\lambda t  \right)\le \dett\left(Dx(u,t)(\omega)\right)\le \exp\left(\eps +\lambda t  \right)
    \end{align*}
with $\lambda <0 $. Hence for all $t\ge T(\omega,\eps,K)$ we get 
\begin{align*}
    \frac{\ln(\int_{\Rd} p_t(u)^p(\omega) \dd u)}{t}&=  \frac{\ln(\int_{K} p_0(u)^p \frac{1}{\left(\dett(Dx(u,t))\right)^{p-1}(\omega)}\dd u)}{t} \\
    &\le  \frac{\ln(\exp((p-1)(\eps -\lambda t))\int_{K}p_0(u)^p \dd u}{t}) 
\end{align*}
and similarly 
\begin{align*}
\frac{\ln(\int_{\Rd} p_t(u)^p(\omega) \dd u)}{t}\ge  \frac{\ln(\exp((p-1)(-\eps -\lambda t))\int_{K}p_0(u)^p \dd u)}{t}.
\end{align*}
Since $\eps >0$ was arbitrary $\lambda_p$ exists and
\begin{align*}
    \lambda_p = -\lambda(p-1)
\end{align*}
since $\lambda<0$ we get that
\begin{align*}
    \left(\frac{\lambda_p}{p}\right)_{p\ge 1}
\end{align*}
is strictly increasing. 
\end{beweis}

\printbibliography
\end{document}